\newtheorem{thm}{Theorem}[section]
\newtheorem{thmx}{Theorem}
\newtheorem{lemma}[thm]{Lemma}
\theoremstyle{remark}
\newtheorem{rmk}[thm]{Remark}
\newcommand{\D}{\Delta}
\newcommand{\qand}{\quad\text{and}\quad}
\DeclareMathOperator{\corr}{Cor}
\DeclareMathOperator{\ld}{LD}
\DeclareMathOperator{\mld}{MLD}
\DeclareMathOperator{\leb}{Leb}
\numberwithin{equation}{section}
\thanks{JFA and JSM are partially supported by  CMUP (UID/MAT/00144/2019), PTDC/MAT-PUR/28177/2017 and   PTDC/MAT-PUR/4048/2021, which are funded by FCT (Portugal) with national (MEC) and European structural funds through the program  FEDER, under the partnership agreement PT2020. JSM is also supported by  the FCT doctoral scholarship 2021.07090.BD}
\keywords{Decay of correlations; Large deviations; Maximal large deviations; Young structures; Recurrence rates; Partial hyperbolicity}
\subjclass[2010]{37A05, 37A25, 37D25,37D30}
\begin{document}
\title[From Decay of Correlations to Recurrence Times]{From Decay of Correlations to Recurrence Times\\ in  Systems with Contracting Directions}

\author[J. F. Alves]{Jos\'{e} F. Alves}
\address{Jos\'{e} F. Alves\\ Centro de Matem\'{a}tica da Universidade do Porto\\ Rua do Campo Alegre 687\\ 4169-007 Porto\\ Portugal.}
\email{jfalves@fc.up.pt} \urladdr{http://www.fc.up.pt/cmup/jfalves}

\author[J. S. Matias]{Jo\~{a}o S. Matias}
\address{Jo\~{a}o S. Matias\\
Centro de Matem\'{a}tica da Universidade do Porto\\ Rua do Campo Alegre 687\\ 4169-007 Porto\\ Portugal.}
\email{up201504959@fc.up.pt} 

\maketitle



\begin{abstract}

Classical results due to L.-S. Young establish that, for dynamical systems admitting suitable inducing schemes, the rate of decay of correlations can be derived from the recurrence properties of the inducing structure. Conversely, in the context of non-invertible systems -- those lacking contracting directions -- converse statements have been obtained, whereby decay rates imply recurrence estimates. In the present work, we extend these converse results to the setting of invertible systems, which inherently involve contracting directions.

\end{abstract}


\tableofcontents

\section{Introduction}


In recent decades, the analysis of statistical properties has assumed an increasingly prominent role within the broader study of Dynamical Systems. Rather than focusing solely on individual orbits or deterministic trajectories, this perspective seeks to understand the typical behaviour of large sets of orbits over time. Central to this approach is the notion of \emph{physical measures}, which describe the asymptotic time averages of observables along sets of orbits having positive Lebesgue measure. These measures provide a meaningful link between the abstract dynamical framework and quantities that are observable in practice. A particularly significant subclass of physical measures is given by the so-called \emph{Sinai-Ruelle-Bowen (SRB) measures}. These are ergodic invariant probability measures characterised by the presence of at least one positive Lyapunov exponent and the property that their conditional measures along local unstable manifolds are absolutely continuous with respect to the induced Riemannian volume. 

Although SRB  measures yield laws of large numbers for dynamically defined processes, the mixing property often proves useful for investigating finer statistical features. A discrete-time dynamical system~$f$ is said to be \emph{mixing} with respect to an invariant probability measure $\mu$ if
\begin{equation*}
\left|\mu\left(f^{-n}(A)\cap B\right)-\mu(A)\mu(B)\right|\rightarrow 0, \quad\text{as  } n\rightarrow +\infty,
\end{equation*}
for any measurable sets $A$ and $B$. Defining the \emph{correlation function} for observables $\varphi, \psi\colon M\to \mathbb{R}$,
\begin{equation}\label{cor}
\corr_{\mu}(\varphi, \psi\circ f^n) = \left|\int \varphi (
\psi\circ f^n) \, d\mu - \int \varphi \, d\mu \int \psi \, d\mu\right|,
\end{equation}
it is sometimes possible to obtain explicit decay rates,  provided the observables $\varphi$ and $\psi$ possess sufficient regularity.
Note that,   taking the observables $\varphi$ and $\psi$ as characteristic functions of measurable sets, equation~\eqref{cor} corresponds to the definition of mixing. 

Another important observation is that a mixing system is necessarily ergodic. In particular, for such systems, Birkhoff's Ergodic Theorem guarantees that, for any integrable observable $\varphi$,
\begin{equation*}\label{BK}
\lim_{n\to+\infty} \frac{1}{n}
\sum_{j=0}^{n-1} \varphi\circ f^j = \int \varphi d\mu
\end{equation*}
holds  $\mu$-almost everywhere. This naturally raises the interesting question of how quickly this convergence takes place, that is, how fast the \emph{large deviation} at time~$n$,
\begin{equation*}
\ld_{\mu}(\varphi,\epsilon,n)=\mu\left(\left|\frac{1}{n}\sum_{i=0}^{n-1}\varphi\circ f^i - \int\varphi d\mu\right|>\epsilon\right),
\end{equation*}
tends to zero as $n\to\infty$.
One may also consider the \emph{maximal large deviation} at time $n$,
\begin{equation*}\label{mld}
\mld_{\mu}(\varphi,\epsilon,n)=\mu\left(\sup_{j\geqslant n}\left|\frac{1}{j}\sum_{i=0}^{j-1}\varphi\circ f^i-\int\varphi d\mu\right|>\epsilon\right).
\end{equation*}
Clearly, the decay rate of $\mld_{\mu}(\varphi,\epsilon,n)$ implies a corresponding decay rate for $\ld_{\mu}(\varphi,\epsilon,n)$. The converse also holds in the exponential and stretched exponential cases -- see~\eqref{eq.sea} in Subsection~\ref{se.exp} -- but fails in the polynomial case.

Significant advances in understanding the statistical properties of dynamical systems were made by Sinai, Ruelle and Bowen in the 1970s, when Markov partitions were introduced to analyse the statistical behaviour of uniformly hyperbolic systems — Axiom~A attractors — via symbolic dynamics; see \cite{Bo75,BR75,Ru76,Si72}. Since then, a central objective in dynamical systems theory has been to extend this strategy to much wider classes of systems, including those exhibiting nonuniform hyperbolic behaviour.

A major breakthrough came with the work of Young \cite{Y98,Y99}, who developed the framework of \emph{Young towers}, that is, Markov extensions of the original system constructed via \emph{inducing schemes}. This abstract structure has proven extremely effective in the systematic study of a broad range of nonuniformly hyperbolic dynamical systems, including piecewise hyperbolic maps, billiards with convex scatterers, logistic maps, intermittent maps, and Hénon-like attractors — the latter studied in collaboration with Benedicks in~\cite{BY00}. Young towers are built from inducing schemes that isolate a suitable subset of the phase space (a \emph{Young structure}) where the dynamics display sufficiently strong hyperbolic behaviour; see subsections~\ref{se.young} and~\ref{se.towers} below for details. Points in this subset return under iteration after variable amounts of time, and these return times define the levels of the tower.

Although the return times may be unbounded, the construction ensures that the set of points with large return times has small measure, allowing one to control the statistical behaviour of the entire system.
 The flexibility of allowing arbitrarily large return times   is essential for treating nonuniformly hyperbolic systems. 
%
%
%
In some cases, one can explicitly determine the asymptotic behaviour of the return time function, which quantifies how rapidly typical orbits exhibit strong hyperbolic behaviour. These estimates are instrumental in deriving precise statistical laws for the system. For dynamical systems that admit a Young tower, a broad array of such laws has been established, including decay of correlations and the central limit theorem \cite{Y99}, large deviations \cite{AF19, MN05, MN08}, and the vector-valued almost sure invariance principle \cite{MN09}.

Since Young towers have become one of the most powerful tools for analysing statistical properties of nonuniformly hyperbolic systems, a natural question is to determine the extent to which such structures exist. In \cite{ALP05, G06}, the authors demonstrated that hyperbolic times can be employed to construct Young towers, and explored how the tail of hyperbolic times relates to the tail of recurrence times in the context of nonuniformly expanding systems. 
For partially hyperbolic diffeomorphisms whose central direction is mostly expanding, this question was addressed in \cite{AL15, AP10}, allowing --~together with results from \cite{AP08, M09, MN09,Y98,Y99}~-- the derivation of several statistical properties for the SRB measures obtained in \cite{ABV00}.
Although no general results comparable to \cite{AL15, AP10} exist for partially hyperbolic systems with mostly contracting central directions, Young structures have been constructed in specific cases by Castro \cite{C02, C04}, enabling the derivation of statistical properties such as decay of correlations and the Central Limit Theorem for the SRB measures obtained in \cite{BV00}; see also Dolgopyat's work \cite{D04} in the three-dimensional setting.

The connection between Young towers and statistical properties was further developed in \cite{ADL13, ADL17}, where the authors characterised the measures that can be lifted to Young towers, both for systems with and without contracting directions.
A further step was taken in~\cite{AFL11}, where bidirectional connections were established between rates for decay of correlations, large deviations and hyperbolic times. This approach enabled the use of results from \cite{ALP05, G06} to prove a partial converse to Young's theorem for nonuniformly expanding systems (noninvertible systems).
More recently, \cite{BS23} improved upon the polynomial case by employing maximal large deviations instead of standard large deviations, yielding sharper estimates for the tails of hyperbolic times.


\emph{A key difficulty preventing the extension of the results in \cite{AFL11} to systems with contracting directions  lies in the fact that, for such systems, decay of correlations cannot be established for H\"older observables against those in $L^\infty$. Instead, both observables must satisfy stronger regularity conditions. In this work, we provide a first contribution toward establishing a bidirectional relationship between decay of correlations and recurrence times associated with Young structures in the setting of invertible systems with contractive directions.}
 
Throughout this text, we consider $M$ to be a finite-dimensional compact Riemannian manifold, and $f \colon M \to M$ a piecewise $C^1$ diffeomorphism, possibly with critical, singular, or discontinuity points. Let $d$ denote the distance on $M$, and let $\leb$ represent the Lebesgue measure on the Borel subsets of $M$, both induced by the Riemannian metric. Given a submanifold $\gamma \subset M$, we denote by $\leb_\gamma$ the Lebesgue measure on $\gamma$, induced by the restriction of the Riemannian metric to $\gamma$.
 
 \subsection{Young structures}\label{se.young}

Here, we introduce the fundamental concept of a Young structure, along with several related objects.
We begin by considering continuous families $\Gamma^s$ and $\Gamma^u$ of $C^1$ disks in $M$.
We impose a form of weak forward contraction on the disks in $\Gamma^s$ (referred to as stable disks), and a corresponding weak backward contraction on the disks in $\Gamma^u$ (unstable disks). We say that $\Gamma$ is a \emph{continuous family} of $C^1$ disks in $M$ if there are a  compact metric space $K$, a unit disk $D$ in some $\mathbb{R}^k$ and an injective continuous function $\Phi\colon K\times D\to M$ such that
\begin{itemize}
	\item $\Gamma=\left\{\Phi\left(\{x\}\times D\right)\colon x\in K\right\}$;
	\item$\Phi$ maps $K\times D$ homeomorphically onto its image;
	\item $x\mapsto\Phi|_{\{x\}\times D}$ defines a continuous map from $K$ into $\text{Emb}^1(D,M)$, where $\text{Emb}^1(D,M)$ denotes the space of $C^1$ embeddings of $D$ into $M$.  
\end{itemize}
All the disks in $\Gamma$ have the same dimension $\dim D$, which we also denote by $\dim \Gamma$.  
We say that a compact set $\Lambda \subset M$ has a \emph{product structure} if there exist continuous families of $C^1$ disks $\Gamma^s$ and $\Gamma^u$ such that
\begin{itemize}
	\item $\Lambda = \left(\bigcup_{\gamma \in \Gamma^s} \gamma \right) \cap \left(\bigcup_{\gamma \in \Gamma^u} \gamma \right)$;
	\item $\dim \Gamma^s + \dim \Gamma^u = \dim M$;
	\item each $\gamma \in \Gamma^s$ meets each $\gamma \in \Gamma^u$ in exactly one point.
\end{itemize}
We say that $\Lambda_0 \subset \Lambda$ is an \emph{s-subset} if $\Lambda_0$ has a product structure with respect to families $\Gamma_0^s$ and $\Gamma_0^u$ such that $\Gamma_0^s \subset \Gamma^s$ and $\Gamma_0^u = \Gamma^u$; \emph{u-subsets} are defined similarly.  
Let $\gamma^*(x)$ denote the disk in $\Gamma^*$ containing $x \in \Lambda$, for $* = s,u$.  
Consider the \emph{holonomy map} $\Theta_{\gamma,\gamma'} \colon \gamma \cap \Lambda \to \gamma' \cap \Lambda$, defined for each $x \in \gamma \cap \Lambda$ by
\[
\Theta_{\gamma,\gamma'}(x) = \gamma^s(x) \cap \gamma'.
\]
We say that a compact set $\Lambda$ has a \emph{Young structure} (with respect to $f$) if $\Lambda$ has a product structure given by continuous families of $C^1$ disks $\Gamma^s$ and $\Gamma^u$ such that
\[
\leb_\gamma \big( \Lambda \cap \gamma \big) > 0, \quad \text{for all } \gamma \in \Gamma^u,
\]
and conditions \ref{Y1}--\ref{Y5} below are satisfied.

\begin{enumerate}[label=\textbf{(Y$_{\arabic*}$)}]
	\item \label{Y1} \textbf{Markov:} there are pairwise disjoint s-subsets $\Lambda_1$, $\Lambda_2$, $\cdots$$\subset$ $\Lambda$ such that
	\begin{itemize}
		\item $\leb_\gamma\left(\left(\Lambda \setminus \cup_i\Lambda_i\right)\cap\gamma\right)=0$ for all $\gamma\in\Gamma^u$;
		\item for each $i\geqslant1$, there is $R_i\in\mathbb{N}$ such that $f^{R_i}\left(\Lambda_i\right)$ is a u-subset and, moreover, for all $x\in\Lambda_i,$
		\begin{equation*}
			f^{R_i}\left(\gamma^s(x)\right)\subset \gamma^s\left(f^{R_i}(x)\right) \text{ and } 	f^{R_i}\left(\gamma^u(x)\right)\supset \gamma^u\left(f^{R_i}(x)\right).
		\end{equation*}
	\end{itemize}
\end{enumerate}

This Markov property allows us to introduce a \emph{recurrence time} ${R \colon \Lambda \to \mathbb{N}}$ and a \emph{return map} $f^R \colon \Lambda \to \Lambda$, defined for each $i \geqslant 1$ by
\begin{equation*}
	R|_{\Lambda_i} = R_i \quad \text{and} \quad f^R|_{\Lambda_i} = f^{R_i}|_{\Lambda_i}.
\end{equation*}
We remark that $R$ and $f^R$ are defined on a full $\leb_\gamma$-measure subset of $\Lambda \cap \gamma$, for each $\gamma \in \Gamma^u$. Thus, there exists a set $\Lambda' \subset \Lambda$ intersecting each $\gamma \in \Gamma^u$ in a full $\leb_\gamma$-measure subset, such that $\left(f^R\right)^n(x)$ belongs to some $\Lambda_i$ for all $n \geqslant 0$ and $x \in \Lambda'$.  
For points $x, y \in \Lambda'$, we define the \emph{separation time}
\begin{equation}\label{eq.separa}
	s(x,y) = \min \left\{ n \geqslant 0 \colon \left(f^R\right)^n(x) \text{ and } \left(f^R\right)^n(y) \text{ lie in distinct } \Lambda_i \text{'s} \right\},
\end{equation}
with the convention that $\min(\emptyset) = \infty$. For definiteness, we set the separation time equal to zero for all other points.  
For the remaining conditions, we consider constants $C > 0$ and $0 < \beta < 1$ depending only on $f$ and $\Lambda$.

\begin{enumerate}[label=\textbf{(Y$_{\arabic*}$)}]
	\setcounter{enumi}{1}
	\item \label{Y2} \textbf{Contraction on stable disks:} for all $i\geqslant1$, $\gamma\in\Gamma^s$ and $x,y\in\gamma\cap\Lambda_i$,
	\begin{itemize}
		\item $d\left(\left(f^R\right)^n(x),\left(f^R\right)^n(y)\right)\leqslant C\beta^n$, for all $n\geqslant0$;
		\item$d\left(f^j(x),f^j(y)\right)\leqslant Cd(x,y)$, for all $1\leqslant j \leqslant R_i$.	
	\end{itemize}
	
\end{enumerate}

\begin{enumerate}[label=\textbf{(Y$_{\arabic*}$)}]
	\setcounter{enumi}{2}
	\item \label{Y3} \textbf{Expansion on unstable disks}: for all $i\geqslant1$, $\gamma\in\Gamma^u$ and $x,y\in\gamma\cap\Lambda_i$,
	\begin{itemize}
		\item $d\left(\left(f^R\right)^n(x),\left(f^R\right)^n(y)\right)\leqslant C\beta^{s(x,y)-n}$, for all $n\geqslant0$;
		\item$d\left(f^j(x),f^j(y)\right)\leqslant Cd\left(f^R(x),f^R(y)\right)$, for all $1\leqslant j \leqslant R_i$.	
	\end{itemize}

	\item \label{Y4}\textbf{ Gibbs:} for all $i\geqslant 1$, $\gamma\in\Gamma^u$ and $x,y\in\gamma\cap\Lambda_i$,
	\begin{equation*}
		\log\frac{\det Df^R|_{T_x\gamma}}{\det Df^R|_{T_y\gamma}}\leqslant C\beta^{s\left(f^R(x),f^R(y)\right)}.
	\end{equation*}
	
	\item \label{Y5}\textbf{ Regularity of the stable holonomy}: for all $\gamma,\gamma'\in\Gamma^u$, the measure $\left(\Theta_{\gamma,\gamma'}\right)_* \leb_\gamma$ is absolutely continuous with respect to $\leb_{\gamma'}$ and its density $\rho_{\gamma,\gamma'}$ satisfies\begin{equation*}
		\frac{1}{C}\leqslant\int_{\gamma'\cap\Lambda}\rho_{\gamma,\gamma'} d\leb_{\gamma'}\leqslant C \quad\text{and}\quad\log\frac{\rho_{\gamma,\gamma'}(x)}{\rho_{\gamma,\gamma'}(y)}\leqslant C\beta^{s(x,y)},
	\end{equation*}
	for all $x,y\in\gamma'\cap\Lambda$.
\end{enumerate}
We define the \emph{tail of recurrence times} as the set
\begin{equation*}
	\left\{R>n\right\}:=\left\{x\in\Lambda\colon R(x)>n\right\}.
\end{equation*}

\subsection{Partially hyperbolic systems}\label{phs}
We say that a compact set $K \subset M$ is a \emph{partially hyperbolic set} for $f$ if it is forward invariant under $f$ and there exists a $Df$-invariant splitting 
\[
T_K M = E^{ss} \oplus E^{cu},
\]
together with a constant $0 < \lambda < 1$, such that for some choice of Riemannian metric on~$M$,
\begin{enumerate}
    \item $E^{ss}$ is \emph{uniformly contracting}: $\lVert Df|_{E_x^{ss}} \rVert < \lambda$, for all $x \in K$;
    \item $E^{cu}$ is \emph{dominated} by $E^{ss}$: 
    \(
    \lVert Df|_{E_x^{ss}} \rVert \cdot \lVert Df^{-1}|_{E_{f(x)}^{cu}} \rVert < \lambda\) for all \(x \in K.
    \)
\end{enumerate}
We refer to $E^{ss}$ as the \emph{strong-stable} subbundle and to $E^{cu}$ as the \emph{centre-unstable} subbundle.

One of the main objectives of this work is to obtain converse results to those of Young in the context of partial hyperbolicity -- specifically, to show that for certain mixing rates, the decay of correlations is sufficient to guarantee the existence of Young structures with corresponding recurrence tails.
Recall that an observable $\varphi \colon M \to \mathbb{R}$ is said to be $\eta$-H\"older continuous, for some $\eta > 0$, if there exists a constant $C > 0$ such that
\[
\left| \varphi(x) - \varphi(y) \right| \leqslant C d(x, y)^\eta, \quad \forall x, y \in M.
\]
Let $\mathcal{H}_\eta(M)$ denote the space of all $\eta$-H\"older continuous observables. The H\"older norm of an observable $\varphi \in \mathcal{H}_\eta$ is defined as
\[
\lVert \varphi \rVert_{\mathcal{H}_\eta} := \lVert \varphi \rVert_\infty + \sup_{x \neq y} \frac{\left| \varphi(x) - \varphi(y) \right|}{d(x, y)^\eta}.
\]
In our first result, we establish a connection between maximal large deviations for H\"older observables and the existence of Young structures with associated recurrence tails.

We use the notation $\lesssim$ to indicate inequality up to a constant factor depending only on the dynamics. Furthermore, we refer to the decay as \emph{exponential}, \emph{stretched exponential}, or \emph{polynomial} if it satisfies
$
\lesssim \epsilon^{-\tau n}$,  $ \lesssim \epsilon^{-\tau n^\theta}$, or $\lesssim n^{-\alpha},
$
respectively, for some constants $\tau, \alpha > 0$ and $0 < \theta < 1$.

\begin{thmx}\label{main2}
	Let $f\colon M\to M$ be a $C^{1+\eta}$ diffeomorphism and let $K\subset M$ be a  partially hyperbolic set. Suppose that $f$ admits an ergodic $SRB$ measure $\mu$ supported on $K$ with all Lyapunov exponents along $E^{cu}$  positive.  
	\begin{enumerate}[(a)]
		\item If there exists $\alpha>1$ such that $\mld_\mu(\varphi,\epsilon, n)\lesssim n^{-\alpha}$ for every $\varphi\in\mathcal{H}_\eta(M)$, then there exists a Young structure such that $\leb_\gamma\left\{R>n\right\}\lesssim n^{-\alpha}$, for every $\gamma\in\Gamma^u$.
	\end{enumerate}
	\begin{enumerate}[(b)]
		\item If there exist $\tau,\theta>0$ such that $\mld_\mu(\varphi,\epsilon, n)\lesssim n^{-\alpha}$ for every $\varphi\in\mathcal{H}_\eta(M)$, then there exists a Young structure and $\tau'>0$ such that $\leb_\gamma\left\{R>n\right\}\lesssim e^{-{\tau'} n^{\theta}}$, for every $\gamma\in\Gamma^u$.
	\end{enumerate}
\end{thmx}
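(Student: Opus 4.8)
The plan is to split the argument into a \emph{geometric} part, in which a Young structure is built so that its recurrence tail is comparable to the tail of hyperbolic times along $E^{cu}$, and a \emph{statistical} part, in which this hyperbolic-time tail is extracted from the maximal large deviation hypothesis. For the geometric part I would adapt the construction of \cite{AP10, ALP05}: since all Lyapunov exponents of $\mu$ along $E^{cu}$ are positive, the centre-unstable direction is nonuniformly expanding, and a reference centre-unstable disk together with the strong-stable lamination produces the families $\Gamma^u$ and $\Gamma^s$ and a product-structured set $\Lambda$. Returns are defined at hyperbolic times, at which $f^n$ admits an inverse branch over a disk of definite size with uniformly bounded distortion; this yields the Markov property \ref{Y1} and the expansion/distortion estimates \ref{Y3}, \ref{Y4}, while the uniform contraction and domination of the partial hyperbolicity give \ref{Y2} and, through the $C^{1+\eta}$ regularity, the holonomy regularity \ref{Y5}. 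The outcome of this part is a comparison of the form $\leb_\gamma\{R>n\}\lesssim\leb_\gamma\{x\colon x\text{ has no good hyperbolic return up to time }n\}$.

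For the statistical part I would work with the observable
\[
\varphi_N(x) = \frac1N \log \bigl\lVert \bigl(Df^N|_{E^{cu}_x}\bigr)^{-1} \bigr\rVert,
\]
extended to a bounded $\eta$-H\"older function on $M$ (the subbundle $E^{cu}$ is H\"older by domination, and $Df^N$ is $\eta$-H\"older). Since the central Lyapunov exponents are positive, $\int\varphi_N\,d\mu\to-\lambda^{cu}_{\min}<0$ as $N\to\infty$, so $\bar\varphi:=\int\varphi_N\,d\mu<0$ once $N$ is large; the one-step conorm of $Df$ alone need not have negative log-average, which is why passing to a high iterate is convenient, and since the maximal large deviations of $f$ and $f^N$ have comparable rates while a Young structure for $f^N$ induces one for $f$ with recurrence times multiplied by $N$, it suffices to argue for $f^N$. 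Fixing $c$ with $\bar\varphi<-c<0$, the Pliss lemma converts a Birkhoff average below $-c$ into a definite density of $\sigma$-hyperbolic times with $\sigma=e^{-c}$; contrapositively, the absence of a good hyperbolic return up to time $n$ forces the Birkhoff averages of $\varphi_N$ to exceed $-c$ over a range of times reaching beyond $n$. With $\epsilon=-c-\bar\varphi>0$ this places the bad set inside the event controlled by $\mld_\mu(\varphi_N,\epsilon,\cdot)$, yielding $\leb_\gamma\{R>n\}\lesssim\mld_\mu(\varphi_N,\epsilon,n)$. The crucial point, following \cite{BS23}, is that the \emph{maximal} large deviation delivers the sharp exponent in part (a): a bound through the single-time deviation $\ld_\mu(\varphi_N,\epsilon,j)$ would have to be summed over $j\geqslant n$, losing a factor $\sum_{j\geqslant n}j^{-\alpha}\sim n^{-(\alpha-1)}$, whereas $\mld_\mu$ controls the supremum over $j\geqslant n$ at once and preserves the exponent $\alpha$.

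Two transfers remain. First, the hypothesis is stated for $\mu$, whereas the conclusion concerns $\leb_\gamma$ on unstable disks; here I would use that $\mu$ is an SRB measure, so its disintegration along the centre-unstable leaves has densities bounded above and below with respect to $\leb_\gamma$. Integrating the $\mu$-estimate and applying Fubini gives the same bound on $\leb_\gamma$ for a full-measure set of leaves, from which a good reference disk is selected; the estimate then propagates to every $\gamma\in\Gamma^u$ by the bounded distortion \ref{Y4} and the holonomy regularity \ref{Y5}. Second, since $\varphi_N\in\mathcal{H}_\eta(M)$, the maximal large deviation hypothesis applies to it directly. Part (b) follows the same scheme with the stretched-exponential bound in place of the polynomial one; in that regime the single-time and maximal large deviations are equivalent (cf.\ the discussion in Subsection~\ref{se.exp}), and the tail $\sum_{j\geqslant n}e^{-\tau j^\theta}$ is again stretched-exponential with the same $\theta$, which is exactly why only the constant degrades from $\tau$ to $\tau'$.

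The main obstacle I expect is the genuinely invertible nature of the construction. In the nonuniformly expanding setting of \cite{AFL11} there is no stable direction, so a Young structure reduces to a one-sided inducing scheme; here one must build the full product structure, and the delicate points are the regularity of the stable holonomy \ref{Y5} and the forward/backward contraction balance in \ref{Y2}--\ref{Y3}, which rely on the domination hypothesis and on controlling distortion across the strong-stable lamination. Intertwined with this is the transfer of a \emph{forward-dynamically defined} bad set --~the points lacking hyperbolic returns~-- from $\mu$ to $\leb_\gamma$ on a specific reference leaf, which is precisely the step where the contracting direction, absent in \cite{AFL11}, must be handled with care.
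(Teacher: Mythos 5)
Your overall architecture coincides with the paper's: pass to an iterate $f^N$ so that $\phi=\log\lVert(Df^{N}|_{E^{cu}_x})^{-1}\rVert$ has negative average (the paper additionally selects an ergodic component of $(f^N,\mu)$ on which the integral is negative), control the tail of the first time from which the Birkhoff averages of $\phi$ stay below a negative threshold by $\mld_\mu(\phi,\epsilon,\cdot)$ --- and you correctly isolate that the \emph{maximal} large deviation is what avoids the loss $\sum_{j\geqslant n}j^{-\alpha}\sim n^{-(\alpha-1)}$ in the polynomial case --- and then feed that tail into the constructions of \cite{AP10} (polynomial) and \cite{AL15} (stretched exponential), which the paper cites as black boxes rather than rebuilding the product structure.

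The genuine gap is in your transfer from $\mu$ to $\leb_{\gamma_0}$. Disintegrating $\mu$ along centre-unstable leaves and applying Fubini--Chebyshev to $\mu\{\mathcal{E}>n\}\lesssim r(n)$ only shows that, \emph{for each fixed $n$}, the leaves $\gamma$ with $\leb_\gamma(\{\mathcal{E}>n\}\cap\gamma)>K\,r(n)$ form a set of factor measure at most $C/K$. The exceptional set of leaves depends on $n$, these bounds are not summable over $n$, and the bad set is not a union of stable disks (it is defined by forward Birkhoff averages of $\phi$, which is not constant along stable disks), so no single reference disk on which $\leb_{\gamma_0}\{\mathcal{E}>n\}\lesssim r(n)$ holds for \emph{all} $n$ with one constant is produced. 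Repairing this by weights (e.g.\ summing $g_n/(a_n r(n))$ with $\sum a_n^{-1}<\infty$) degrades the rate, which is fatal precisely in the polynomial case that the maximal large deviation hypothesis is meant to sharpen. The paper circumvents this entirely: it first invokes \cite[Theorem D]{ADL17} to obtain an auxiliary Young structure from the nonuniform expansion along $E^{cu}$ alone (no rate information needed), lifts $\phi$ to the associated tower, applies the Melbourne--Nicol decomposition $\tilde\phi=\psi+\chi-\chi\circ T$ to replace $\tilde\phi$ by an observable constant on stable disks at the cost of an $O(1/n)$ perturbation of the Birkhoff averages, descends to the quotient tower, and uses that the quotient SRB measure restricted to the base disk $\gamma_0$ has density bounded below with respect to $\leb_{\gamma_0}$. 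This yields the pointwise-in-$n$ inequality
\begin{equation*}
\mld_\mu(\phi,\epsilon,n)\;\geqslant\; c\,\leb_{\gamma_0}\left\{\sup_{j\geqslant n}\left|\frac{1}{j}\sum_{i=0}^{j-1}\phi\circ f^{Ni}\right|>\frac{5}{2}\epsilon\right\}\;\geqslant\; c\,\leb_{\gamma_0}\left\{\mathcal{E}>n\right\}
\end{equation*}
on an actual reference disk. You correctly flagged this transfer as the delicate step, but the mechanism you propose for it does not close.
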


The proof of Theorem~\ref{main2} is presented in Section~\ref{proof2}.  
While the original result in~\cite{AFL11} relies on large deviations,   the stronger condition of \emph{maximal large deviations} introduced in~\cite{BS23}  yields  sharper estimates in the polynomial case.  
In the (stretched) exponential case, however, it makes no essential difference whether one uses large deviations or maximal large deviations.  
It is worth noting that in Theorem~\ref{main2}, the maximal large deviations assumption is only required for the very specific type of H\"older observables  in~\eqref{eq.phin}.

Examples of systems exhibiting partially hyperbolic sets in the conditions of Theorem~\ref{main2} include the \emph{derived-from-Anosov} systems discussed in \cite[Appendix~A]{ABV00}, which demonstrate exponential mixing rates, as well as the \emph{solenoid with intermittency} presented in \cite[Section~5]{AP08a}, where polynomial mixing rates are observed.

\subsection{Young towers}\label{se.towers}

Here, we introduce the tower map associated with the return map of a Young structure. 
Consider a set $\Lambda \subset M$ equipped with a Young structure, and the corresponding return map $f^R \colon \Lambda \to \Lambda$, as defined in Subsection~\ref{se.young}. We associate to these objects the \emph{tower}
\begin{equation*}
	{\D}=\left\{(x,\ell)\colon x\in\Lambda\text{ and } 0\leqslant \ell<R(x)\right\},
\end{equation*}
and the \emph{tower map} ${T}\colon {\D}\to {\D}$, given by
\begin{equation}\label{towermap}
	{T}(x,\ell)=\begin{cases}
		(x,\ell+1), &\text{  if } \ell<R(x)-1\\
		\left(f^R(x),0\right),&\text{ if } \ell=R(x)-1
	\end{cases}.
\end{equation}
The base ${\D}_0$ of the tower ${\D}$ is naturally identified with the set $\Lambda$, and each \emph{level} ${\D}_\ell$ with the set $\left\{ R > \ell \right\} \subset \Lambda$. Since each level of the tower is identified with a subset of~$\Lambda$, it remains meaningful to speak of an \emph{SRB measure} for $T$, understood as a probability measure whose conditionals on the unstable disks in the family $\Gamma^u$ defining the Young structure are absolutely continuous with respect to the Lebesgue measures on those disks. It is a well-known fact that the tower map admits a unique SRB measure $\nu$, which is ergodic; see \cite[Section~2]{Y98} or \cite[Theorem~4.11]{A20} for further details. The maps $f$ and $T$ are connected via the measurable semiconjugacy
\begin{equation}\label{eq.conjuga}
	\begin{array}{rccc}
		\pi\colon\!\!\!\!& \D &\longrightarrow & M\\
		&(x,\ell)&\longmapsto & f^\ell(x)
	\end{array},
\end{equation}
such that
\begin{equation*}
	\pi \circ T = f \circ \pi.
\end{equation*}
Moreover, if $\nu$ denotes the unique ergodic SRB measure for $T$, then $\mu = \pi_*\nu$ is the unique ergodic SRB measure for $f$ such that $\mu(\Lambda) > 0$; see  \cite[Theorem~4.9]{A20}.

We extend the separation time $s$ associated with $f^R$ to the tower map ${T}$ by setting
\[
s\big((x,\ell),(x',\ell')\big) = 
\begin{cases}
	s(x,x'), & \text{if } \ell = \ell', \\
	0,       & \text{otherwise}.
\end{cases}
\]
Fixing a constant $\beta > 0$ for which conditions \ref{Y2}--\ref{Y5} in Subsection~\ref{se.young} hold, we define the linear space of functions
\[
\mathcal{F}_{\beta}({\D}) = \left\{ \varphi \colon {\D} \to \mathbb{R} \mid \exists C > 0 : \left| \varphi(y) - \varphi(z) \right| \leqslant C \beta^{s(y,z)}, \quad \forall y,z \in {\D} \right\}.
\]
Consider $\mathcal{F}_\beta({\D})$ endowed with the norm
\[
\lVert \varphi \rVert_{\beta} := \left| \varphi \right|_{\beta} + \lVert \varphi \rVert_{\infty}, \quad \text{where} \quad 
\left| \varphi \right|_{\beta} = \sup_{z \neq y} \frac{\left| \varphi(y) - \varphi(z) \right|}{\beta^{s(y,z)}}.
\]
We may interpret $\beta^{s(y,z)}$ as a dynamical distance between the points $y,z\in\Delta$. With this viewpoint, $\mathcal{F}_\beta({\D})$ can be regarded as a space of Lipschitz observables.


\begin{thmx}\label{main3}
	Let $T\colon\D\to\D$ be   a Young tower and $\nu$ be  its SRB measure.  
	\begin{enumerate}[label=(\alph*)]
		\item If there exist $C>0$ and $\alpha>1$ such that $$\corr_{\nu}\left(\psi, \xi\circ {T}^n\right)\leqslant C \lVert\psi\rVert_{\beta} \lVert\xi\rVert_{\beta}n^{-\alpha},$$ for all  ${\psi,\xi\in\mathcal{F}_{\beta}(\D)}$, then, for all $q>\max\{1,\alpha\}$, there exists  $C_{\alpha,q}>0$ only depending on $\alpha$ and $q$ such that 
		$$
		\mld_\nu(\varphi,\epsilon,n)\leqslant C_{\alpha,q}   \lVert\varphi\rVert_{\beta}\lVert\varphi\rVert_{\infty}^{2q-1}\epsilon^{-2q}n^{-\alpha},
$$
 for all $\varphi\in\mathcal{F}_{\beta}(\D)$.
			\item If there exist $C,\tau,\theta>0$ such that 
		$$\corr_{\nu}\left(\psi,\xi\circ {T}^n\right)\leqslant C \lVert\psi\rVert_{\beta} \lVert\xi\rVert_{\beta}e^{-\tau n^\theta},$$ 
		for all ${\psi,\xi\in\mathcal{F}_{\beta}(\D)}$, then  for all  $\varphi\in\mathcal{F}_{\beta}(\D)$ there exist $C_{\tau,\theta,\varphi}>0$ only depending  on $\tau,\theta$ and $\varphi$  such that 
		$$
		\mld_\nu(\varphi,\epsilon,n)\leqslant C_{\tau,\theta,\varphi}e^{-\tau' n^{\theta'}\epsilon^{2\theta'}},
		$$
		where  $\theta'=\theta/(\theta+1)$, 
		$\tau'=C_{\tau,\theta} \lVert\varphi\rVert_{\beta}^{-2\theta'}$ and $C_{\tau,\theta}>0$ depends only    on $\tau,\theta$.
	\end{enumerate}
\end{thmx}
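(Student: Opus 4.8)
The plan is to reduce maximal large deviations to a decay-of-correlations estimate for the specific observable $\varphi - \int \varphi\, d\nu$, which we may assume has zero $\nu$-mean. The standard route passes through a moment estimate: if one controls $\int \left(\sum_{i=0}^{j-1}\varphi\circ T^i\right)^{2q} d\nu$ for large even powers $2q$, then a Chebyshev inequality bounds the ordinary large deviation $\ld_\nu(\varphi,\epsilon,j)$, and a further summation-and-dyadic-blocking argument upgrades this to the maximal large deviation $\mld_\nu$. So the first step is to expand the $2q$-th moment of the Birkhoff sum into a sum of correlations $\int \prod_{l=1}^{2q}\bigl(\varphi\circ T^{i_l}\bigr)\,d\nu$ over multi-indices $0\leqslant i_1\leqslant\cdots\leqslant i_{2q}\leqslant j-1$.

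First I would estimate each such multiple correlation. The key observation is that products of the form $\varphi\circ T^{i_1}\cdots\varphi\circ T^{i_m}$ can be grouped so that, after writing the integrand as $\psi\cdot(\xi\circ T^n)$ with $n$ the largest consecutive gap between the $i_l$'s, the hypothesis $\corr_\nu(\psi,\xi\circ T^n)\leqslant C\lVert\psi\rVert_\beta\lVert\xi\rVert_\beta n^{-\alpha}$ applies. This requires checking that finite products of functions in $\FF$ remain in $\FF$ with controlled $\lVert\cdot\rVert_\beta$ norm; since $\beta^{s(y,z)}$ behaves like a distance and $\FF$ consists of bounded Lipschitz-type observables, a product of $k$ such functions lies in $\FF$ with norm bounded by $C^k\lVert\varphi\rVert_\beta\lVert\varphi\rVert_\infty^{k-1}$, which explains the appearance of $\lVert\varphi\rVert_\infty^{2q-1}$ in the target bound. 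Combining the decay factor on the largest gap with the combinatorial count of multi-indices, and using that $\int\varphi\,d\nu=0$ to kill the leading terms, yields a moment bound of order $j^{q}\cdot(\text{something})$ rather than $j^{2q}$, producing the decisive cancellation. This is precisely the mechanism isolated in the works on large deviations via decay of correlations that the paper cites.

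Second, I would feed the moment bound into Chebyshev: $\ld_\nu(\varphi,\epsilon,j)\leqslant \epsilon^{-2q}j^{-2q}\int\left(\sum_{i<j}\varphi\circ T^i\right)^{2q}d\nu$, which after the moment estimate gives a bound of the shape $\lesssim \lVert\varphi\rVert_\beta\lVert\varphi\rVert_\infty^{2q-1}\epsilon^{-2q}j^{-\alpha}$ once $q>\max\{1,\alpha\}$ is chosen large enough to guarantee that the polynomial decay dominates the combinatorial growth. To pass from $\ld_\nu$ to $\mld_\nu$ in the polynomial case one cannot simply sum, since $\sum_{j\geqslant n}j^{-\alpha}$ converges only to order $n^{-(\alpha-1)}$ and would cost a power; instead I would use a maximal-inequality argument, partitioning $\{j\geqslant n\}$ into dyadic blocks $[2^k,2^{k+1})$, controlling the supremum over each block by the deviation at its right endpoint together with a telescoping estimate, and summing the resulting geometric-in-$k$ series so that the exponent $\alpha$ is preserved. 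This is where the choice $q>\max\{1,\alpha\}$ and the freedom in $q$ enter, and it is the source of the constant $C_{\alpha,q}$.

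For part (b), the strategy is identical up to the final optimisation: the stretched-exponential correlation decay $e^{-\tau n^\theta}$ replaces $n^{-\alpha}$ in the multiple-correlation estimate, yielding a moment bound that is then optimised over the moment order $q$, which is now allowed to grow with $n$. Balancing the combinatorial factor (roughly $q!$-type growth from the number of admissible index patterns) against the stretched-exponential gain $e^{-\tau n^\theta}$ leads, after Stirling, to the exponent $\theta'=\theta/(\theta+1)$ and the $\epsilon$-dependence $\epsilon^{2\theta'}$; the maximal upgrade is automatic here because, as remarked after the definition of $\mld$ in the excerpt, in the stretched-exponential regime the maximal and ordinary large deviations have the same rate. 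I expect the main obstacle to be the combinatorial bookkeeping in the moment expansion: one must organise the $(2q)!/(2q\text{-fold symmetry})$ terms according to their gap structure and show that the cancellation from $\int\varphi\,d\nu=0$ together with the decay on the maximal gap suffices, uniformly, to beat the polynomial (resp. combinatorial) growth — and in part (b) to do so while tracking the constants finely enough to extract the correct stretched-exponential exponent $\theta'$.
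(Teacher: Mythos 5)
Your proposal follows a genuinely different route from the paper --- a direct expansion of the $2q$-th moment of the Birkhoff sum into multiple correlations, rather than the paper's discretisation-plus-quotient-tower argument --- but it contains a gap that is fatal in precisely this setting. The step where you group $\prod_{l}\varphi\circ T^{i_l}$ as $\psi\cdot(\xi\circ T^n)$ and invoke the hypothesis requires \emph{both} $\psi$ and $\xi$ to lie in $\mathcal{F}_\beta(\Delta)$ with controlled norm, and your claimed bound $\lVert\psi\rVert_\beta\leqslant C^k\lVert\varphi\rVert_\beta\lVert\varphi\rVert_\infty^{k-1}$ is false for the time-shifted factors. On the tower the separation time decreases by one at every return to the base, so $s(T^iy,T^iz)$ can be as small as $s(y,z)-i$ and hence $\left|\varphi\circ T^i\right|_\beta$ can be as large as $\beta^{-i}\left|\varphi\right|_\beta$. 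The grouped products therefore carry factors of order $\beta^{-i_m}$ and $\beta^{-(i_{2q}-i_{m+1})}$, which swamp the gain $n^{-\alpha}$ from the largest gap. In the non-invertible setting this is harmless because one side of the correlation only needs to be bounded; here the hypothesis demands Lipschitz regularity on both sides, and this is exactly the ``key difficulty'' for systems with contracting directions that the paper emphasises in its introduction. Your moment expansion cannot be repaired without addressing this.

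The paper circumvents the problem entirely: it replaces $\varphi$ by a discretisation $\varphi_k$ constant on stable leaves and on cylinders of $\mathcal{Q}_{2k}$ (so that $\lVert\varphi_k\rVert_\beta\leqslant\lVert\varphi\rVert_\beta$ and $\varphi_k$ descends to the quotient tower $\bar\Delta$), applies the correlation hypothesis only to the single pair $\psi=\varphi_k$, $\xi\equiv 1$ --- for which no norm growth occurs --- to obtain $\int\left|\mathcal{L}^n\varphi_k\right|d\bar\nu\lesssim\lVert\varphi_k\rVert_\beta\, r(n)$ for the transfer operator $\mathcal{L}$ of $\overline T$, and then delegates the passage from this $L^1$-contraction to maximal large deviations to \cite[Theorem~3.1]{BS23} (polynomial case, which supplies the dyadic/maximal machinery you sketch but do not carry out) and to \cite[Theorem~2]{AF19} (stretched exponential case, where summing the ordinary large deviations suffices). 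A final $\liminf$ argument using the pointwise convergence $\varphi_k\to\varphi$ transfers the estimates back to $\varphi$. If you wish to salvage a moment-method proof, you would first have to pass to the quotient tower and rewrite the multiple correlations using $\mathcal{L}$, at which point you are essentially reproving the cited black-box theorems.
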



The proof of Theorem~\ref{main3} is presented in Section~\ref{proof1}.  
For dynamical systems admitting a Young tower, we are able to deduce further information on the large deviations of certain observables, provided that these observables satisfy sufficient regularity conditions.  
In the particular case of H\"older continuous observables, Theorem~\ref{main3} can be applied to obtain the following result.

\begin{thmx}\label{Maximal large deviations}
	Let $T\colon\D\to\D$ be a Young tower for 
	$f:M\to M$,  $\nu$ be the SRB measure of~$T$ and $\mu=\pi_*\nu$. Given $0<\eta<1$, set $\beta'=\beta^{\eta/2}$.
	\begin{enumerate}[label=(\alph*)]
		\item
		If there exist $C>0$ and $\alpha>1$ such that $$\corr_{\nu}\left(\psi, \xi\circ {T}^n\right)\leqslant C \lVert\psi\rVert_{\beta'} \lVert\xi\rVert_{\beta'}n^{-\alpha},$$ for all  ${\psi,\xi\in\mathcal{F}_{\beta'}(\D)}$, then, for all $q>\max\{1,\alpha\}$, there exists  $C_{\alpha,q}>0$ only depending on $\alpha$ and $q$ such that $$\mld_\mu(\varphi,\epsilon,n)\leqslant C_{\alpha,q}   \lVert\varphi\rVert_{\mathcal H_\eta} \lVert\varphi\rVert_{\infty}^{2q-1}\epsilon^{-2q}n^{-\alpha},
$$ for all $\varphi\in\mathcal{H}_\eta(M)$.
	\item  If there exist $C,\tau,\theta>0$ such that 
		$$\corr_{\nu}\left(\psi,\xi\circ {T}^n\right)\leqslant C \lVert\psi\rVert_{\beta'} \lVert\xi\rVert_{\beta'}e^{-\tau n^\theta},$$ 
		for all ${\psi,\xi\in\mathcal{F}_{\beta'}(\D)}$, then for all $\varphi\in\mathcal{H}_\eta(M)$ there exist $C_{\tau,\theta,\varphi}>0$ only depending  on $\tau,\theta$ and $\varphi$  such that 
		$$
		\mld_\mu(\varphi,\epsilon,n)\leqslant C_{\tau,\theta,\varphi}e^{-\tau' n^{\theta'}\epsilon^{2\theta'}},
		$$
		where  $\theta'=\theta/(\theta+1)$, 
		$\tau'=C_{\tau,\theta} \lVert\varphi\rVert_{\mathcal H_\eta}^{-2\theta'}$ and $C_{\tau,\theta}>0$ depends  only  on $\tau,\theta$.
	\end{enumerate}
\end{thmx}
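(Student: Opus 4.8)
The plan is to deduce Theorem~\ref{Maximal large deviations} from Theorem~\ref{main3} by transferring the problem from the manifold $M$ to the tower $\D$ and using the H\"older regularity of $\varphi$ to control the lift. First I would set $\tilde\varphi = \varphi\circ\pi$, where $\pi\colon\D\to M$ is the semiconjugacy from~\eqref{eq.conjuga}. Since $\pi\circ T = f\circ\pi$ and $\mu=\pi_*\nu$, the Birkhoff sums satisfy $\frac1j\sum_{i=0}^{j-1}\tilde\varphi\circ T^i = \bigl(\frac1j\sum_{i=0}^{j-1}\varphi\circ f^i\bigr)\circ\pi$ and $\int\tilde\varphi\,d\nu = \int\varphi\,d\mu$; hence $\mld_\mu(\varphi,\epsilon,n)=\mld_\nu(\tilde\varphi,\epsilon,n)$ exactly, because $\pi_*\nu=\mu$ pushes the relevant level sets forward with the correct measure. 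This reduces both parts to estimating $\mld_\nu(\tilde\varphi,\epsilon,n)$ via Theorem~\ref{main3}.

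The key step is then to verify that $\tilde\varphi=\varphi\circ\pi\in\mathcal{F}_{\beta'}(\D)$ with $\lVert\tilde\varphi\rVert_{\beta'}\lesssim\lVert\varphi\rVert_{\mathcal H_\eta}$, so that the correlation hypotheses stated for $\mathcal{F}_{\beta'}(\D)$ feed directly into Theorem~\ref{main3} (applied with $\beta'$ in place of $\beta$). Clearly $\lVert\tilde\varphi\rVert_\infty\le\lVert\varphi\rVert_\infty$. For the seminorm, I must bound $|\tilde\varphi(y)-\tilde\varphi(z)|$ by $C\beta'^{\,s(y,z)}$. Writing $y=(x,\ell)$ and $z=(x',\ell')$: if $\ell\ne\ell'$ then $s(y,z)=0$ and $\beta'^{\,0}=1$ makes the bound trivial from $\lVert\varphi\rVert_\infty$. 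If $\ell=\ell'$, then $\pi(y)=f^\ell(x)$ and $\pi(z)=f^\ell(x')$, and $\eta$-H\"older continuity gives $|\tilde\varphi(y)-\tilde\varphi(z)|\le\lVert\varphi\rVert_{\mathcal H_\eta}\,d\bigl(f^\ell(x),f^\ell(x')\bigr)^\eta$. Here I invoke the expansion/contraction estimates \ref{Y2}--\ref{Y3}, which along unstable disks give $d\bigl(f^R\text{-iterates}\bigr)\le C\beta^{s(x,x')}$-type control; combined with the bounded-distortion-along-$f^j$ clauses, one obtains $d(f^\ell(x),f^\ell(x'))\lesssim\beta^{s(x,x')/2}$ (the half-power absorbs the passage between returns and the finitely many intermediate iterates within a level). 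Raising to the power $\eta$ yields $\beta^{\eta s(x,x')/2}=\beta'^{\,s(x,x')}=\beta'^{\,s(y,z)}$, which is exactly the required estimate.

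Once $\tilde\varphi\in\mathcal{F}_{\beta'}(\D)$ with the norm bound is established, I apply Theorem~\ref{main3} verbatim (with $\beta$ replaced by $\beta'$, which is legitimate since $0<\beta'<1$ and $\beta'$ still satisfies the role of the contraction constant for the function space). In the polynomial case~(a), the conclusion $\mld_\nu(\tilde\varphi,\epsilon,n)\le C_{\alpha,q}\lVert\tilde\varphi\rVert_{\beta'}\lVert\tilde\varphi\rVert_\infty^{2q-1}\epsilon^{-2q}n^{-\alpha}$ becomes the stated bound after substituting $\lVert\tilde\varphi\rVert_{\beta'}\lesssim\lVert\varphi\rVert_{\mathcal H_\eta}$ and $\lVert\tilde\varphi\rVert_\infty\le\lVert\varphi\rVert_\infty$, absorbing the implied constant into $C_{\alpha,q}$. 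The (stretched) exponential case~(b) follows identically, with $\tau'=C_{\tau,\theta}\lVert\tilde\varphi\rVert_{\beta'}^{-2\theta'}\ge C_{\tau,\theta}\lVert\varphi\rVert_{\mathcal H_\eta}^{-2\theta'}$ up to the same constant, giving the claimed dependence.

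The main obstacle I anticipate is the seminorm estimate in the case $\ell=\ell'$: one must show that two points sitting at the same tower level with large separation time $s(x,x')$ project under $f^\ell$ to points that are genuinely close in $M$, uniformly in the level $\ell<R(x)$. This is delicate because $\ell$ can be arbitrarily large, so the naive bound $d(f^\ell(x),f^\ell(x'))\le Cd(x,x')$ from the second clause of~\ref{Y2} is insufficient on its own; the argument must combine the exponential contraction along stable directions, the controlled expansion/distortion along unstable directions from \ref{Y3}, and the product structure to guarantee that the H\"older modulus is controlled by a genuine power of $\beta^{s(x,x')}$ rather than by $d(x,x')$ alone. Handling the interplay between the intrinsic separation time on $\Lambda$ and the extrinsic Riemannian distance after $\ell$ iterates --- precisely the contracting-direction difficulty highlighted in the introduction --- is where the technical care is concentrated, and it is the reason the exponent is halved to produce $\beta'=\beta^{\eta/2}$.
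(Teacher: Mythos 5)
The reduction to the tower via $\tilde\varphi=\varphi\circ\pi$ and the identity $\mld_\mu(\varphi,\epsilon,n)=\mld_\nu(\tilde\varphi,\epsilon,n)$ match the paper (Lemma~\ref{ld1}), but your key step --- that $\tilde\varphi\in\mathcal{F}_{\beta'}(\D)$ with $\lVert\tilde\varphi\rVert_{\beta'}\lesssim\lVert\varphi\rVert_{\mathcal H_\eta}$ --- is false, and this is precisely the contracting-direction obstruction highlighted in the introduction. Take $y=(x,\ell)$ and $z=(x',\ell)$ with $x'\in\gamma^s(x)$. By \ref{Y1} the sets $\Lambda_i$ are s-subsets and $f^{R}$ maps stable disks into stable disks, so $(f^R)^n(x)$ and $(f^R)^n(x')$ lie in the same $\Lambda_i$ for every $n$; hence $s(y,z)=s(x,x')=\infty$ and $\beta'^{\,s(y,z)}=0$. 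Membership of $\tilde\varphi$ in $\mathcal{F}_{\beta'}(\D)$ would therefore force $\varphi(f^\ell(x))=\varphi(f^\ell(x'))$, i.e.\ $\varphi$ constant on the $f^\ell$-images of stable disks, which fails for a generic H\"older $\varphi$. The underlying point is that \ref{Y2} only gives $d(f^\ell(x),f^\ell(x'))\leqslant C\,d(x,x')$ within a return block, and contraction at rate $\beta^n$ in the number $n$ of \emph{elapsed} returns, whereas the separation time measures agreement of \emph{future} itineraries; the stable component of $d(f^\ell(x),f^\ell(x'))$ simply does not decay in $s(x,x')$, so no combination of \ref{Y2}, \ref{Y3} and the product structure will yield the bound $d(f^\ell(x),f^\ell(x'))\lesssim\beta^{s(x,x')/2}$ that your argument requires.

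The paper circumvents this with the Melbourne--Nicol decomposition \eqref{MN}: $\tilde\varphi=\psi+\chi-\chi\circ T$ with $\psi\in\mathcal{F}_{\beta'}(\D)$ constant on stable leaves, $\chi$ bounded, and $\lVert\psi\rVert_{\beta'},\lVert\chi\rVert_\infty\leqslant C\lVert\varphi\rVert_{\mathcal H_\eta}$; this construction is where the exponent $\eta/2$ in $\beta'=\beta^{\eta/2}$ actually originates. Theorem~\ref{main3} is then applied to $\psi$, not to $\tilde\varphi$, and the coboundary is absorbed by observing that the Birkhoff averages of $\tilde\varphi$ and $\psi$ differ by at most $2\lVert\chi\rVert_\infty/n\leqslant\epsilon/2$ for $n$ large, whence $\mld_\nu(\tilde\varphi,\epsilon,n)\leqslant\mld_\nu(\psi,\epsilon/2,n)$. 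Your proposal is missing this entire mechanism, and without it the application of Theorem~\ref{main3} to $\tilde\varphi$ is not legitimate.
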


The proof of Theorem~\ref{Maximal large deviations} is presented in Section~\ref{lift}.  
As one may observe, these two results are related to those in~\cite{AF19, AFL11, M09}.  
The main distinction in our setting is that we assume decay of correlations for observables on the tower under the condition that \emph{both} observables are Lipschitz, whereas the aforementioned works on non-invertible systems allow one of the observables to be merely bounded.  

Although verifying that an observable is bounded is significantly simpler than verifying that it is Lipschitz, the relevance of Theorems~\ref{main3} and~\ref{Maximal large deviations} lies in their applicability to partially hyperbolic systems. In that context, it is known that decay of correlations between bounded observables is not attainable.

Finally, as a consequence of the preceding results, we obtain the following converse to Young's results for invertible systems, once again within a partially hyperbolic framework.

\begin{thmx}\label{main}
	Let $f\colon M\to M$ be a $C^{1+\eta}$ diffeomorphism and  $K\subset M$ be a  partially hyperbolic set with a Young structure $\Lambda\subset K$. Let  $T:\Delta\to\Delta$ be the  associated tower map and $\nu$ its SRB measure.   
	\begin{enumerate}[label=(\alph*)]
		\item If there exists $\alpha>1$ such that $\corr_{{\nu}}\left(\varphi,\psi\circ {T}^n\right)\lesssim n^{-\alpha}$, for every $\varphi,\psi\in\mathcal{F}_{\beta}({\D})$, then $f$ admits a Young structure with $\leb_\gamma\left\{R>n\right\}\lesssim n^{-\alpha}$, for any $\gamma\in\Gamma^u$.
	\end{enumerate}
	\begin{enumerate}[label=(\alph*)]
		\setcounter{enumi}{1}
		\item If there exist $\tau,\theta>0$ such that $\corr_{{\nu}}\left(\varphi,\psi\circ {T}^n\right)\lesssim e^{-\tau n^\theta}$, for every $\varphi,\psi\in\mathcal{F}_{\beta }({\D})$, then $f$ admits a Young structure with $\leb_\gamma\left\{R>n\right\}\lesssim e^{-\tau' n^{\theta'}}$, with $\theta'=\theta/(\theta+1)$, for some $\tau'>0$  and   any $\gamma\in\Gamma^u$.
	\end{enumerate}
\end{thmx}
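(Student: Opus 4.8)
\emph{Plan.} I would obtain Theorem~\ref{main} as the composition of the two preceding transfer results: Theorem~\ref{Maximal large deviations} turns decay of correlations on the tower into maximal large deviations for H\"older observables on~$M$, and Theorem~\ref{main2} turns such maximal large deviations into a Young structure with a controlled recurrence tail. Concretely, from the given Young structure $\Lambda\subset K$ I form the tower $T\colon\D\to\D$, its SRB measure $\nu$, and the projection $\mu=\pi_*\nu$ via the semiconjugacy~\eqref{eq.conjuga}. The hypothesis supplies the rate of decay of correlations for observables in $\FF$, and the output of Theorem~\ref{main2} is the asserted recurrence tail for a (possibly new) Young structure for~$f$.

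First I would verify that $\mu=\pi_*\nu$ satisfies the standing hypotheses of Theorem~\ref{main2}. By the facts recalled in Subsection~\ref{se.towers}, $\nu$ is the unique ergodic SRB measure of $T$ and $\mu=\pi_*\nu$ is the unique ergodic SRB measure of $f$ with $\mu(\Lambda)>0$; in particular $\mu$ is ergodic and supported on the forward-invariant partially hyperbolic set $K$. The expansion on unstable disks~\ref{Y3} together with the Gibbs property~\ref{Y4} force all Lyapunov exponents of $\mu$ along the centre-unstable bundle $E^{cu}$ to be positive, so $\mu$ is precisely the type of SRB measure required by Theorem~\ref{main2}.

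Next comes the rate bookkeeping, which splits into the two cases but otherwise runs in parallel. In case (a), decay of correlations at rate $n^{-\alpha}$ yields, through Theorem~\ref{Maximal large deviations}(a), a bound $\mld_\mu(\varphi,\epsilon,n)\lesssim n^{-\alpha}$ for every $\varphi\in\mathcal{H}_\eta(M)$; feeding this into Theorem~\ref{main2}(a) produces a Young structure with $\leb_\gamma\{R>n\}\lesssim n^{-\alpha}$. In case (b), decay at rate $e^{-\tau n^\theta}$ yields, through Theorem~\ref{Maximal large deviations}(b), a maximal large deviation bound of the form $e^{-\tau' n^{\theta'}\epsilon^{2\theta'}}$ with $\theta'=\theta/(\theta+1)$. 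Here it is essential that $\theta'$ is the exponent that survives: Theorem~\ref{main2}(b) transforms a stretched-exponential deviation bound with exponent $\theta'$ into a tail $\leb_\gamma\{R>n\}\lesssim e^{-\tau'' n^{\theta'}}$ with the \emph{same} $\theta'$, which accounts exactly for the $\theta'=\theta/(\theta+1)$ appearing in the statement.

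The main obstacle is the mismatch between the regularity classes. Theorem~\ref{main} assumes decay of correlations on $\FF$, whereas applying Theorem~\ref{Maximal large deviations} requires it on $\mathcal{F}_{\beta'}(\D)$ with $\beta'=\beta^{\eta/2}$; the factor $\eta/2$ is precisely the loss incurred when lifting an $\eta$-H\"older observable on $M$ to the tower, where $d(\pi(y),\pi(z))\lesssim\beta^{s(y,z)/2}$, so that $\varphi\circ\pi\in\mathcal{F}_{\beta'}(\D)$. Since $\beta<\beta'<1$ one has $\FF\subset\mathcal{F}_{\beta'}(\D)$, and hence the given hypothesis is a priori the weaker one; this inclusion must therefore be upgraded. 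I would handle this by showing that, on a Young tower, decay of correlations for $\FF$ at a given polynomial or stretched-exponential rate propagates to $\mathcal{F}_{\beta'}(\D)$ at the same rate. The natural route is a truncation argument: approximate $\psi,\xi\in\mathcal{F}_{\beta'}(\D)$ by functions constant on separation-time cylinders, which lie in $\FF$, and balance the correlation estimate for the truncations against the uniform approximation error. The delicate point, and where I expect the real work to be, is carrying out this truncation so as \emph{not} to degrade the exponent in the polynomial case~(a); once the two classes are matched at the correct rate, the composition of Theorems~\ref{Maximal large deviations} and~\ref{main2} closes the argument.
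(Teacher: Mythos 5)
Your architecture coincides with the paper's: the proof in Section~\ref{proof2} consists precisely of taking the H\"older observable $\phi=\log\lVert(Df^{N}|_{E^{cu}_x})^{-1}\rVert$, applying Theorem~\ref{Maximal large deviations} to convert the assumed correlation decay on the tower into maximal large deviations with respect to $\mu=\pi_*\nu$, and feeding the result into Theorem~\ref{main2}; your rate bookkeeping, including the survival of $\theta'=\theta/(\theta+1)$ through Theorem~\ref{main2}(b), is exactly the paper's. (The paper only needs the maximal large deviation bound for the single observable $\phi$, as remarked after Theorem~\ref{main2}; you derive it for all of $\mathcal{H}_\eta(M)$, which amounts to the same thing since Theorem~\ref{Maximal large deviations} delivers that anyway.)

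The one genuine divergence is your treatment of the mismatch between $\FF$ and $\mathcal{F}_{\beta'}(\D)$ with $\beta'=\beta^{\eta/2}>\beta$. Your diagnosis is correct: as literally stated, the hypothesis of Theorem~\ref{main} quantifies over the smaller class $\FF$ and is therefore formally weaker than what Theorem~\ref{Maximal large deviations} requires. But the paper does not perform, and does not need, the truncation you propose; it applies Theorem~\ref{Maximal large deviations} directly, implicitly using that the constant $\beta$ in \ref{Y2}--\ref{Y5} may always be replaced by any larger constant in $(0,1)$, so that the correlation hypothesis is to be read for the relevant class. I would steer you away from the truncation route, for the reason you yourself anticipate: a function constant on the elements of $\mathcal{Q}_{2k}$ has $|\cdot|_{\beta}$-seminorm only of order $\lVert\xi\rVert_\infty\beta^{-2k}$, and the approximation error $\lVert\xi-\xi_k\rVert$ is governed by the number of returns to the base up to time $2k$, i.e.\ by the tail of $R$ --- precisely the quantity the theorem is trying to estimate --- so the balancing is either circular or degrades the exponent $\alpha$ in case~(a). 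The correct repair is notational rather than analytic: either restate the hypothesis of Theorem~\ref{main} for $\mathcal{F}_{\beta'}(\D)$, or run Theorem~\ref{main3} with $\beta'$ in place of $\beta$ from the start, which is legitimate since $\beta'$ also satisfies \ref{Y2}--\ref{Y5} and the decomposition \eqref{MN} of \cite[Lemma~3.2]{MN05} lands $\psi$ in $\mathcal{F}_{\beta'}(\D)$ in any case. With that adjustment, the rest of your plan, including the verification that $\mu=\pi_*\nu$ satisfies the standing hypotheses of Theorem~\ref{main2}, is sound and matches the paper.
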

The proof of Theorem~\ref{main}   is presented in Section~\ref{proof2}.

\section{Maximal large deviations in the tower}\label{proof1}
Here we prove Theorem \ref{main3}.
Instead of considering directly the observable $\varphi$ we start to deduce the result for a sequence of discretisations  $\left(\varphi_k\right)_{k\in\mathbb{N}}$ of   $\varphi$. For that we make use of \cite[Theorem 3.1]{BS23} and \cite[Theorem~2]{AF19}.  
Finally, after obtaining the desired large deviation estimates for the discretisations, we will show how to use them to obtain the (maximal) large deviations estimates for the original observable $\varphi$.
We first define a quotient tower  that   will be  helpful.

\subsection{Quotient tower}\label{se.quotient}
On the tower $\D$ consider the equivalence relation $\sim$ defined by $x\sim y$ if and only if $y\in\gamma^s(x)$ and consider the quotient space $\bar{\D}:=\D/\sim$. By fixing an unstable disk $\gamma_0\in\Gamma^u$, the representative of each equivalence class is given by the holonomy map $\Theta_{\gamma_0}(x):= \gamma^s(x)\cap\gamma_0$. This way, $\bar{\D}$ may be seen as a subset of $\D$ and, in particular, an observable in $\D$ that is constant along stable disks may be seen as an observable in $\bar\D$, and vice-versa. Moreover, we can define a tower map $\overline{T}$ in a similar way as we have defined in~\eqref{towermap}. Associated to the tower maps $T$ and $\overline T$ we have a measurable semiconjugacy 

\begin{equation*}
	\begin{array}{rccc}
		\Theta\colon\!\!\!\!&\D &\longrightarrow &\bar\D\\
		&(x,\ell)&\longmapsto & \left(\Theta_{\gamma_0}(x),\ell\right)
	\end{array}
\end{equation*}
such that
\begin{equation*}
	\Theta\circ {T}=\overline T\circ\Theta.
\end{equation*}
Regarding the measure theoretical properties of these systems, recall that $T$ admits a unique ergodic SRB measure $\nu$. Moreover, the measure $\bar \nu=\Theta_*\nu$ is the unique ergodic $\overline T$-invariant probability measure absolutely continuous with relation to $\leb_{\gamma_0}$. A  complete description of these towers can be found in \cite{A20,Y98}.

\subsection{Discretisation}

The key to obtain the estimates for $\mld_\nu\left(\varphi,\epsilon,n\right)$ is by considering a discretisation of $\varphi$. We begin by defining it and then present some useful properties. Recall that property \ref{Y1}  gives us a partition of $\Delta_0$, which induces a partition on higher levels of the tower $\D$. Collecting all these partitions we obtain a global partition $\mathcal{Q}$ of $\D$. With that in mind, we define, for each $n\in\mathbb{N}$,
\begin{equation*}
	\mathcal{Q}_n=\bigvee_{j=0}^{n-1}T^{-j}\mathcal{Q}:=\left\{\omega_0\cap\omega_1\cap\cdots\cap \omega_{n-1} | \omega_j\in T^{-j}\mathcal{Q}, \text{  for } 0\leqslant j\leqslant n-1\right\}.
\end{equation*}
Given $k\in\mathbb{N}$ we define $\varphi_k\colon \D\to\mathbb{R}$ to be the  \emph{discretisation} of $\varphi$, defined on each $Q\in \mathcal{Q}_{2k}$ by
\begin{equation}\label{eq.varphika}
	\varphi_k|_Q=\inf\left\{\varphi(x)\colon x\in Q\right\}.
\end{equation}
Note that $\varphi_k$ is constant along stable disks, and so it can be also seen as an observable on the quotient tower $\bar\D$. 
Our first preliminary result is about the convergence of $\varphi_k$ as $k$ goes to infinity. For convenience, set for each $x\in \Delta $ (or $\bar\Delta$)
$$b_{2k}(x):=\#\left\{1\leqslant j\leqslant 2k\colon T(x)\in \D_0\right\}.$$
\begin{lemma}\label{uniconv}
	$\left(\varphi_k\right)_{k\in\mathbb{N}}$ converges pointwise to $\varphi$, when $k\to\infty$.
\end{lemma}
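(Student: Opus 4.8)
The plan is to control the one-sided error $\varphi(x)-\varphi_k(x)$ directly. Since $x$ lies in its own cell $Q=Q(x)\in\mathcal{Q}_{2k}$ and $\varphi_k|_Q$ is the infimum of $\varphi$ over $Q$, we have $\varphi_k(x)\le\varphi(x)$ and
\[
0\le \varphi(x)-\varphi_k(x)=\sup_{z\in Q}\bigl(\varphi(x)-\varphi(z)\bigr)\le \sup_{z\in Q} C\beta^{s(x,z)},
\]
where the last inequality uses $\varphi\in\FF$. Thus it suffices to bound $s(x,z)$ from below, uniformly over $z\in Q$, by a quantity that tends to infinity with $k$.

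First I would record that $\mathcal{Q}$ refines the decomposition of $\D$ into levels, so any two points in the same cell of $\mathcal{Q}_{2k}$ sit at the same level $\ell$; writing $x=(x_0,\ell)$ and $z=(z_0,\ell)$ we then have $s(x,z)=s(x_0,z_0)$ by the definition of the separation time on the tower. The core combinatorial step is to show that sharing a cell of $\mathcal{Q}_{2k}$ forces the base representatives to remain together under the return dynamics: every return of the $T$-orbit of $x$ to $\D_0$ within the first $2k$ iterates corresponds to one application of $f^R$ to $x_0$, and equality of the $\mathcal{Q}$-itineraries of $x$ and $z$ means that at each such return the points $(f^R)^m(x_0)$ and $(f^R)^m(z_0)$ belong to the same Markov piece $\Lambda_i$ (recall that $\mathcal{Q}$ restricted to the base is exactly the partition $\{\Lambda_i\}$ from \ref{Y1}). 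Since there are $b_{2k}(x)$ such returns, this yields $s(x_0,z_0)\ge b_{2k}(x)$, and hence $0\le\varphi(x)-\varphi_k(x)\le C\beta^{b_{2k}(x)}$.

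It then remains to check that $b_{2k}(x)\to\infty$ as $k\to\infty$. For every $x$ whose forward orbit is defined in the tower -- that is, for $x$ lying over the full-measure set $\Lambda'$ on which all iterates of $f^R$ are defined -- the orbit returns to the base $\D_0$ infinitely often, so the number $b_{2k}(x)$ of returns in the first $2k$ steps grows without bound. As $0<\beta<1$, we get $\beta^{b_{2k}(x)}\to0$ and therefore $\varphi_k(x)\to\varphi(x)$, which is the asserted pointwise convergence. I expect the main obstacle to be the careful bookkeeping in the combinatorial step -- matching returns to the base with applications of $f^R$ and with the correct indices in the separation time, including the off-by-one coming from the ranges $1\le j\le 2k$ versus $0\le j\le 2k-1$ -- rather than any genuine analytic difficulty; the convergence $b_{2k}(x)\to\infty$ is the only place where one uses that orbits recur, and it is exactly why the statement is naturally a pointwise (full-measure) one.
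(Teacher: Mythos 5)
Your proposal follows essentially the same route as the paper: bound $\varphi(x)-\varphi_k(x)$ by $C\beta^{s(x,z)}$ using $\varphi\in\mathcal F_\beta(\Delta)$, observe that sharing a cell of $\mathcal{Q}_{2k}$ forces $s\geqslant b_{2k}(x)$, and conclude from $b_{2k}(x)\to\infty$. Your version is in fact slightly more careful than the paper's (you avoid assuming the infimum defining $\varphi_k$ is attained and you justify the inequality $s\geqslant b_{2k}(x)$ rather than merely asserting it), but the argument is the same.
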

\begin{proof}
	We want to prove that for every $\epsilon>0$ there exists a natural number $p$ such that, for every $k\geqslant p$, $\left|\varphi(x)-\varphi_k(x)\right|<\epsilon$. With that in mind,  fix $k\in\mathbb{N}$ and let $x\in Q\in\mathcal{Q}_{2k}$. Thus, and since $\varphi\in\mathcal{F}_\beta(\D)$,
	\begin{equation}\label{uniform}
		\left|\varphi(x)-\varphi_k(x)\right|=\left|\varphi(x)-\varphi(y)\right|\leqslant C\beta^{s(x,y)},
	\end{equation}
	where $x$ and $y$ belong to the same element $Q$ of the partition $\mathcal{Q}_{2k}$. Moreover, we have  $s(x,y)\geqslant b_{2k}(x).$ This means that we can bound (\ref{uniform}) by $C\beta^{b_{2k}(x)}$. As $k$ goes to infinity, $b_{2k}(x)$ also goes to infinity, so we only need to choose $p$ such that $C\beta^{b_{2p}(x)}<\epsilon$ to obtain the desired.
\end{proof}

Until the end of this subsection we   consider $\varphi_k$ as an observable in $\bar\D$. 
The next results allow us to conclude that the observables in question are regular enough for us to obtain decay of correlations estimates, in other words, we will verify that they belong in $\mathcal{F}_\beta({\bar \D})$, which may be thought of as contained in  $ \mathcal{F}_\beta(\D)$.

\begin{lemma}\label{ita}
For all $k$, we have	$\varphi_k\in\mathcal{F}_\beta({\bar \D})$  and $|\varphi_k|_{\beta}\leqslant  |\varphi|_{\beta}$.
\end{lemma}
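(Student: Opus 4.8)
The plan is to reduce the seminorm estimate to a single statement about separation times. The starting point is the elementary observation that for any two subsets $A,B$ of $\D$ one has $\left|\inf_A\varphi-\inf_B\varphi\right|\leqslant\sup_{a\in A,\,b\in B}\left|\varphi(a)-\varphi(b)\right|$; this follows at once from $\inf_A\varphi\leqslant\varphi(a)\leqslant\varphi(b)+\left|\varphi(a)-\varphi(b)\right|$ upon taking infima over $b$ and then over $a$, and symmetrically. Applying this with $A,B$ the elements $Q_y,Q_z\in\mathcal{Q}_{2k}$ containing two given points $y,z$, and recalling from~\eqref{eq.varphika} that $\varphi_k$ is the infimum of $\varphi$ on each element, it suffices to control $\left|\varphi(u)-\varphi(v)\right|$ for $u\in Q_y$ and $v\in Q_z$. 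Since $\varphi\in\mathcal{F}_\beta(\D)$ gives $\left|\varphi(u)-\varphi(v)\right|\leqslant|\varphi|_\beta\,\beta^{s(u,v)}$ and $0<\beta<1$, everything comes down to proving that $s(u,v)\geqslant s(y,z)$ for all such $u,v$.

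When $y$ and $z$ lie in the same element $Q_y=Q_z$, the function $\varphi_k$ takes the same value at both, so $\left|\varphi_k(y)-\varphi_k(z)\right|=0\leqslant|\varphi|_\beta\beta^{s(y,z)}$ and there is nothing to prove. Assume therefore $Q_y\neq Q_z$. I would first record the ultrametric property of the separation time, namely $s(a,c)\geqslant\min\{s(a,b),s(b,c)\}$, which follows directly from~\eqref{eq.separa}: if the iterates $(f^R)^n$ of $a,b$ and of $b,c$ both remain in a common $\Lambda_i$ for every $n<\min\{s(a,b),s(b,c)\}$, then so do those of $a,c$. Applying this twice yields $s(u,v)\geqslant\min\{s(u,y),\,s(y,z),\,s(z,v)\}$.

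It then remains to compare each term with $s(y,z)$. Because $u,y$ belong to the same element of $\mathcal{Q}_{2k}$, the bound $s(u,y)\geqslant b_{2k}(y)$ already used in the proof of Lemma~\ref{uniconv} applies, and likewise $s(z,v)\geqslant b_{2k}(z)$. On the other hand, since $y$ and $z$ sit in \emph{distinct} elements of $\mathcal{Q}_{2k}$, their $T$-itineraries with respect to $\mathcal{Q}$ must differ within the first $2k$ iterates; as matching itineraries keep the points at equal levels and equal base elements, the first discrepancy occurs precisely at the return where they enter different $\Lambda_i$, which forces $s(y,z)\leqslant b_{2k}(y)=b_{2k}(z)$. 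Combining these facts, the minimum above equals $s(y,z)$, whence $s(u,v)\geqslant s(y,z)$; the seminorm bound $|\varphi_k|_\beta\leqslant|\varphi|_\beta$ follows, and in particular $\varphi_k\in\mathcal{F}_\beta(\bar\D)$.

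The step I expect to require the most care is this last comparison between the combinatorial quantity $b_{2k}$ and the separation time across distinct partition elements. One must verify that matching $\mathcal{Q}$-itineraries force matching return times, so that $b_{2k}(y)=b_{2k}(z)$ whenever $s(y,z)\geqslant1$, and dispose of the degenerate cases $s(y,z)=0$ (points on different levels, where the bound is automatic) and $s(y,z)=\infty$ (which cannot occur when $Q_y\neq Q_z$, as it would force identical itineraries). One should also note that the reduction to $\bar\D$ is legitimate, since $\varphi_k$ is constant along stable disks and the separation time and induced partition descend to the quotient unchanged.
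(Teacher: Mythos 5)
Your proof is correct and rests on essentially the same idea as the paper's: both arguments reduce the seminorm bound to comparing the separation time of representatives $u\in Q_y$, $v\in Q_z$ with $s(y,z)$ — the paper does this for chosen minimizers of $\varphi$ on each cell and shows $s(x_k,y_k)=s(x,y)$ exactly, whereas you bound the difference of infima by the oscillation over all pairs and use the ultrametric inequality, which has the mild advantage of not requiring the infima to be attained. The one slip is the asserted equality $b_{2k}(y)=b_{2k}(z)$, which can fail once the two orbits separate into columns with different return times; however, your argument only needs the two inequalities $s(y,z)\leqslant b_{2k}(y)$ and $s(y,z)\leqslant b_{2k}(z)$, both of which do hold because the separating return occurs within the first $2k$ iterates for each point, so the conclusion is unaffected.
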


\begin{proof}
	We need to show that
	\begin{equation*}
		\frac{\left|\varphi_k(x)-\varphi_k(y)\right|}{\beta^{s(x,y)}} \leqslant  |\varphi|_{\beta},
	\end{equation*}
	for all $x,y\in\bar\D$. If $\left|\varphi_k(x)-\varphi_k(y)\right|=0$, then there is nothing to be checked. On the other hand, if $\left|\varphi_k(x)-\varphi_k(y)\right|\neq0$, by the definition of the discretisation $\varphi_k$ we have that $x$ and $y$ lie on different elements of the partition $\mathcal{Q}_{2k}$, which means that $s(x,y)\leqslant 2k$. Let $Q_x$ and $Q_y$ be the elements in $\mathcal Q_{2k}$ such that $x\in Q_x$ and $ y\in Q_y$.
By definition of $\varphi_k$,  there exist $x_k\in Q_x$ and $y_k\in Q_y$ such that 
 $$\varphi_k(x)=\varphi(x_k)\qand \varphi_k(y)=\varphi(y_k).$$
	Now, observe that $x_k,y_k$ have the same trajectory of $x,y$, respectively, until time $2k$. Since $s(x,y)\leqslant 2k$, it follows that $s(x_k,y_k)\leqslant 2k$ and, necessarily, $s(x_k,y_k)= s(x,y)$.
 Therefore,
	\begin{equation*}
		\frac{\left|\varphi_k(x)-\varphi_k(y)\right|}{\beta^{s(x,y)}}=\frac{\left|\varphi(x_k)-\varphi(y_k)\right|}{\beta^{s(x_k,y_k)}}\leqslant  |\varphi|_{\beta},
	\end{equation*}
thus	proving the result.
\end{proof}

\subsection{Maximal large deviations for the discretisation}
Now we  establish estimates for the maximal large deviations of the discretised observables introduced above, with the estimates depending only on the original observable. This will be used to deduce maximal large deviations for the original observable in Subsection~\ref{sub.original}.

  Given $\varphi \in \mathcal{F}_\beta(  \D)$, consider
a large number $k\in\mathbb N$ and $\varphi_k$ as in~\eqref{eq.varphika}. By Lemma \ref{ita}, we have $\varphi_k\in\mathcal{F}_\beta(\bar \D)$ and $|\varphi_k|_\beta\leqslant |\varphi|_\beta$. Moreover, since  the norm  $\lVert\quad\rVert_\beta$ and the maximal large deviations are both invariant under translation by a constant and $\varphi_k$ is bounded, without loss of generality, we may assume $\varphi_k\geqslant0$, for all $k\in\mathbb{N}$.

In order to use the decay of correlations estimates to obtain the desired maximal large deviations for $\varphi_k$, we introduce the \emph{Perron-Fr\"obenius operator}, also known as \emph{transfer operator}. This operator can be defined in its integral form as the unique linear map $\mathcal{L}$ such that
\begin{equation*}
	\int\left(\mathcal{L}\varphi\right)\psi \,d\bar\nu=\int \varphi\left(\psi\circ  \overline T\right)\,d \bar\nu,
\end{equation*}
for all observables $\varphi\in L^\infty(\bar\nu)$ and $\psi\in L^1(\bar\nu)$. 
Since  $\varphi_k\geqslant0$ and $\mathcal L$ is a positive operator, we get
\begin{equation}\label{eq.nova}
		\int\left|\mathcal{L}^n\varphi_k\right|d\bar\nu = \int \mathcal{L}^n\varphi_kd\bar\nu=\int\varphi_k\cdot1\circ \overline T^nd\bar\nu=\corr_{\bar\nu}\left(\varphi_k,1\circ{\overline{T}}^n\right)=\corr_{ \nu}\left(\varphi_k,1\circ{ {T}}^n\right),
\end{equation}
This last equality follows from the fact that $\varphi_k$ is constant on stable leaves and $\bar\nu=\Theta_*\nu$, where $\Theta$ is the semiconjugacy between $T$ and $\overline T$.

Now we distinguish between two cases, according to the decay of correlations assumed.

\subsubsection{Polynomical case}\label{pol}
Assume that there exists some $C>0$ such that
\begin{equation}\label{dcpoli}
	\text{Cor}_{{\nu}}\left(\psi,\xi\circ {T}^n\right)\leqslant C \lVert\psi\rVert_{\beta}\lVert\xi\rVert_{\beta}n^{-\alpha},
\end{equation}
for every $\xi,\psi\in\mathcal{F}_{\beta}({\D})$. We have in particular  for $\psi=\varphi_k$ and $\xi=1$, 
\begin{equation*}
	\corr_{\nu}\left(\varphi_k,1\circ{T}^n\right)\leqslant C\lVert\varphi_k\rVert_{\beta}n^{-\alpha},
\end{equation*}
which together with~\eqref{eq.nova} yields
\begin{equation*}
		\int\left|\mathcal{L}^n\varphi_k\right|\,d\bar\nu\leqslant C\lVert\varphi_k\rVert_{\beta} n^{-\alpha},
\end{equation*}
Given $q\geqslant 1$, to be chosen later, and recalling that $\lVert\mathcal{L}\varphi_k\rVert_{\infty}\leqslant\lVert\varphi_k\rVert_{\infty},
$
we have
\begin{equation}\label{eq.poly}
	\lVert \mathcal{L}^n\varphi_k\rVert^q_q \leqslant
	\lVert\mathcal{L}^n\varphi_k\rVert_\infty^{q-1}\int\left|\mathcal{L}^n\varphi_k\right|\,d\bar\nu\leqslant C\lVert\mathcal{L}^n\varphi_k\rVert_\infty^{q-1} \lVert\varphi_k\rVert_{\beta} n^{-\alpha}\leqslant C\lVert\varphi_k\rVert_\infty^{q-1}\lVert\varphi_k\rVert_{\beta} n^{-\alpha}.
\end{equation}
Since $\mld_\nu\left(\varphi_k,\epsilon,n\right)=\mld_{\bar\nu}\left(\varphi_k,\epsilon,n\right)$ and  the estimate~\eqref{eq.poly} holds, we now apply \cite[Theorem 3.1]{BS23} to obtain
\begin{equation}\label{eq.mld*}
	\mld_\nu\left(\varphi_k,\epsilon,n\right)\leqslant C_{\alpha,q} \lVert\varphi_k\rVert_{\beta} \lVert\varphi_k\rVert_{\infty}^{2q-1}\epsilon^{-2q}n^{-\alpha},
\end{equation}
for any $q>\max\{1,\alpha\}$, where $C_{\alpha,q}$ is a positive constant only depending on $\alpha$ and $q$. 
Recalling that $ \lVert\varphi_k\rVert_{\infty}\leqslant \lVert\varphi\rVert_{\infty}$ and $\left|\varphi_k\right|_\beta\leqslant\left|\varphi\right|_\beta$, we finally get
\begin{equation*}
	\mld_\nu\left(\varphi_k,\epsilon,n\right) \leqslant C_{\alpha,q}\lVert\varphi\rVert_{\beta}\lVert\varphi\rVert_{\infty}^{2q-1}\epsilon^{-2q}n^{-\alpha}.
\end{equation*}

\subsubsection{Exponential and stretched exponential cases}\label{se.exp}
Assume now that there exist $C, \tau,\theta>0$ such that
\begin{equation*}
	\text{Cor}_{\nu}\left(\psi,\xi\circ {T}^n\right)\leqslant C \lVert\psi\rVert_{\infty}\lVert\xi\rVert_{\infty}e^{-\tau n^\theta},
\end{equation*}
for every $\psi,\xi\in\mathcal{F}_{\beta}({\D})$. As in the previous case, under this assumption we conclude now that there exist constants $C,\tau,\theta>0$  such that 
\begin{equation*}
	\int\left|\mathcal{L}^n\varphi_k\right|d\bar\nu\leqslant C\lVert\varphi_k\rVert_{\beta}e^{-\tau n^\theta}.
\end{equation*}
Since $\ld_\nu\left(\varphi_k,\epsilon,n\right)=\ld_{\bar\nu}\left(\varphi_k,\epsilon,n\right)$ 
applying  \cite[Theorem 2]{AF19}\footnote{Although the statement of this result assumes decay of correlations of $\varphi_k$ against every bounded observable, its proof in fact only requires an (stretched) exponential decay estimate on the moments of $\mathcal{L}^n\varphi_k$.}, we conclude  that
\begin{equation*}
	\ld_\nu\left(\varphi_k,\epsilon,n\right)\leqslant2e^{-\tau_kn^{\theta'}\epsilon^{2\theta'}},
\end{equation*}
where   
\begin{equation*}
\theta'=\theta/(\theta+1),
\quad
\tau_k=C_{\tau,\theta} \lVert\varphi_k\rVert_{\beta}^{-2\theta'}
\end{equation*}
and $C_{\tau,\theta}>0$ is a constant only depending on $\tau,\theta$.
Since $ \lVert\varphi_k\rVert_{\infty}\leqslant \lVert\varphi\rVert_{\infty}$ and $\left|\varphi_k\right|_\beta\leqslant\left|\varphi\right|_\beta$, we may write 
\begin{equation}\label{ldexp}
	\ld_{\nu}\left(\varphi_k,\epsilon,n\right)\leqslant 2e^{-\tau' n^{\theta'}\epsilon^{2\theta'}}.
\end{equation} 
with
\begin{equation*}
	\tau'= C_{\tau,\theta} \lVert\varphi\rVert_{\beta}^{-2\theta'}.
\end{equation*}
 This yields large deviations estimates for  $\varphi_k$ not depending on $k$. These estimates can, in turn, be used to derive maximal large deviations bounds. Indeed,
\begin{equation} \label{eq.sea}
	\begin{aligned}
		\mld_\nu(\varphi_k,\epsilon,n)&=\nu\left(\sup_{j\geqslant n}\left|\frac{1}{j}\sum_{i=0}^{j-1}\varphi_k\circ T^i-\int\varphi_k\right|>\epsilon\right)\\&\leqslant\sum_{j\geqslant n}\ld_{{\nu}}(\varphi_k,\epsilon,j)\\
		&\leqslant \sum_{j\geqslant n} 2e^{-\tau'j^{\theta'}\epsilon^{2\theta'}}\\
		&
		\leqslant C_{\tau,\theta,\varphi}e^{-\tau' n^{\theta'}\epsilon^{2\theta'}},
	\end{aligned}
\end{equation}
where $C_{\tau,\theta,\varphi}>0$ is a constant only depending  on $\tau,\theta$ and $\varphi$.

We remark that in \cite[Remark 3.1]{CDM22}, the authors deduce a sharper estimate for (stretched) exponential large deviations. However, this   result cannot be used in our context, since it needs information on the tails of recurrence times. In fact, \cite{CDM22} gives an improvement in the context of return tails imply large deviations, but not in the context of decay of correlations imply large deviations, being the last one the case we consider.

\subsection{Maximal large deviations for the original observable}\label{sub.original}
We are now ready to conclude the proof of Theorem \ref{main3}, by obtaining estimates for $\mld_{{\nu}}\left(\psi,\epsilon,n\right)$. At this point the rates of decay are not important to specify, so we denote them by $r(n)$. In any case, the following argument works for polynomial, stretched exponential and exponential cases. Recall also that the value of $n$ is fixed. Indeed, let 
\begin{equation*}
	A_k=\left\{\sup_{j\geqslant n}\left|\frac{1}{j}\sum_{i=0}^{j-1}\varphi_k\circ{T}^i-\int\varphi_kd{\nu}\right|>\epsilon\right\}
\end{equation*}
and 
\begin{equation*}
	B=\left\{\sup_{j\geqslant n}\left|\frac{1}{j}\sum_{i=0}^{j-1}\varphi\circ{T}^i-\int\varphi d{\nu}\right|>\epsilon\right\}
\end{equation*}
Setting  $A=\liminf  A_k$, we have ${\nu}(A)\leqslant\liminf{\nu}(A_k)$. Since $${\nu}(A_k)=\mld_{\nu}\left(\varphi_k,\epsilon,n\right)\leqslant C r(n)$$ and $C$ does not depend on $k$, we deduce that $\liminf{\nu}(A_k)\leqslant C r(n)$, and therefore ${\nu}(A)\leqslant C r(n)$. 
Now we check that $B\subset A$. Indeed, let $x\in B$. This means that 
\begin{equation*}
	\sup_{j\geqslant n}{\left|\frac{1}{j}\sum_{i=0}^{j-1}\varphi\circ{T}^i(x)-\int\varphi d{\nu}\right|>\epsilon}.
\end{equation*}
By Lemma \ref{uniconv}, $\varphi_k$ converges ${\nu}$ almost everywhere to $\varphi$, and thus, using Dominated Convergence Theorem, we deduce that 
\begin{equation*}
	\lim\limits_{k\to+\infty}\left(\varphi_k-\int\varphi_kd \nu\right)=\varphi-\int\varphi d \nu\quad \text{  $\nu$-a.e.}.
\end{equation*}Hence, for almost every $x$ and every $j\geqslant n$, 
\begin{equation}\label{domconv}
	\lim\limits_{k\to+\infty}\left|\frac{1}{j}\sum_{i=0}^{j-1}\varphi_k\circ{T}^i(x)-\int\varphi_kd{\nu}\right|= 	\left|\frac{1}{j}\sum_{i=0}^{j-1}\varphi\circ{T}^i(x)-\int\varphi d{\nu}\right|
\end{equation}
This means that, if $x\in B$ we must have, for every $k$ large enough,
\begin{equation*}
	\sup_{j\geqslant n}\left|\frac{1}{j}\sum_{i=0}^{j-1}\varphi_k\circ{T}^i(x)-\int\varphi_kd{\nu}\right|>\epsilon,
\end{equation*}
otherwise \eqref{domconv} would not be possible. But that means that $x\in A$. Hence, $B\subset A$ and, therefore,
\begin{equation*}
	\mld_{{\nu}}\left(\varphi,\epsilon,n\right)= {\nu}(B)\leqslant {\nu}(A)\leqslant Cr(n),
\end{equation*}
which gives us the desired large deviations estimates, concluding the proof of Theorem \ref{main3}.

\section{Maximal large deviations in the original system}\label{lift}
Here we   prove Theorem~\ref{Maximal large deviations}.
 As in the proof of Theorem \ref{main3}, we will not study directly the observable $\varphi$. We lift it to the tower and then consider another observable, with the same rates of maximal large deviations as the lifted one, to which we will apply Theorem \ref{main3}.
\subsection{Lifting to a tower}\label{sub.lift}
Consider  a dynamical system $f\colon M \to M$ that admits a Young tower $T\colon \D \to \D$ and a H\"older observable $\varphi:M\to\mathbb R$.  Consider  also $\pi$ the semiconjugacy between $T$ and $f$ given by~\eqref{eq.conjuga},  $\nu$  the unique ergodic SRB measure for $T$ and $\mu = \pi_*\nu$  the unique ergodic SRB measure for~$f$ such that $\mu(\Lambda) > 0$.
For clarity, we encourage the reader to keep in mind the following commutative diagram, where $\tilde{\varphi}$ denotes the lift of the observable $\varphi$ to the tower, defined by $\tilde{\varphi} = \varphi \circ \pi$.
\[\begin{tikzcd}
	\D && \D \\
	\\
	M && M && \mathbb{R}
	\arrow["\pi", from=1-1, to=3-1]
	\arrow["f", from=3-1, to=3-3]
	\arrow["\pi"', from=1-3, to=3-3]
	\arrow["T", from=1-1, to=1-3]
	\arrow["\varphi", from=3-3, to=3-5]
	\arrow["\tilde \varphi", from=1-3, to=3-5]
\end{tikzcd}\]
With this in mind, we begin by lifting our problem to the tower $T$ and see that the large deviations estimates for $\varphi$ and $\tilde{\varphi}$ are of the same type.

\begin{lemma}\label{ld1}
	$ 
	\mld_\mu\left(\varphi,\epsilon,n\right)=\mld_{\nu}(\tilde{\varphi},\epsilon,n).
	$ 
\end{lemma}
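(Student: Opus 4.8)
The plan is to reduce both maximal large deviations to the very same subset of $M$, viewed once directly on the base and once pulled back through the semiconjugacy $\pi$. First I would record the two elementary facts that make the identity essentially automatic. Since $\mu = \pi_*\nu$, the change-of-variables formula for pushforward measures gives $\int\varphi\,d\mu = \int\varphi\circ\pi\,d\nu = \int\tilde\varphi\,d\nu$, so the two centring constants appearing in $\mld_\mu(\varphi,\epsilon,n)$ and $\mld_\nu(\tilde\varphi,\epsilon,n)$ coincide. Second, iterating the relation $\pi\circ T = f\circ\pi$ from~\eqref{eq.conjuga} yields $\pi\circ T^i = f^i\circ\pi$ for every $i\geqslant 0$, whence $\tilde\varphi\circ T^i = \varphi\circ\pi\circ T^i = (\varphi\circ f^i)\circ\pi$.

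Next I would transport the Birkhoff averages. Summing the previous identity over $0\leqslant i\leqslant j-1$ and dividing by $j$, the time averages satisfy
\[
\frac{1}{j}\sum_{i=0}^{j-1}\tilde\varphi\circ T^i = \left(\frac{1}{j}\sum_{i=0}^{j-1}\varphi\circ f^i\right)\circ\pi.
\]
Combining this with the equality of integrals above shows that the deviation functions on the tower are exactly the pullbacks under $\pi$ of the corresponding deviation functions on $M$: setting $g_j := \frac{1}{j}\sum_{i=0}^{j-1}\varphi\circ f^i - \int\varphi\,d\mu$ on $M$, the analogous function on $\D$ equals $g_j\circ\pi$.

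Finally I would identify the two ``bad'' sets. Writing $B = \{\, y\in M : \sup_{j\geqslant n}|g_j(y)|>\epsilon \,\}$, the set defining $\mld_\nu(\tilde\varphi,\epsilon,n)$ is precisely $\{\, x\in\D : \sup_{j\geqslant n}|g_j(\pi(x))|>\epsilon \,\} = \pi^{-1}(B)$; since the supremum is taken over the countable index set $\{j\geqslant n\}$ and $\pi$ is measurable, both sets are measurable and $\pi^{-1}$ commutes with the supremum without difficulty. The definition of the pushforward then yields
\[
\mld_\nu(\tilde\varphi,\epsilon,n) = \nu\big(\pi^{-1}(B)\big) = (\pi_*\nu)(B) = \mu(B) = \mld_\mu(\varphi,\epsilon,n),
\]
which is the claim. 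I expect no genuine obstacle here: the argument is a routine verification, and the only points demanding a word of care are the measurability of the supremum set (guaranteed by countability of the index) and the fact that $\pi$ commutes with the centring constant, which is exactly the content of $\mu=\pi_*\nu$.
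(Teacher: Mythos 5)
Your argument is correct and follows essentially the same route as the paper: normalise the centring constant via $\int\varphi\,d\mu=\int\tilde\varphi\,d\nu$, iterate the semiconjugacy to get $\tilde\varphi\circ T^i=(\varphi\circ f^i)\circ\pi$, identify the deviation set on $\D$ as $\pi^{-1}(B)$, and apply $\mu=\pi_*\nu$. Your explicit remarks on measurability of the countable supremum are a minor additional care not spelled out in the paper, but the proof is the same.
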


\begin{proof}
	Without loss of generality, we may assume that $\int \varphi d\mu=0$. Since $\pi_*{\nu}=\mu$, we have
	\begin{equation*}
		\begin{aligned}
			\mld_{\mu}\left(\varphi, \epsilon, n\right)&=\mu\left\{\sup_{j\geqslant n}\left| \frac{1}{j}\sum_{i=0}^{j-1}\varphi\circ f^i\right|>\epsilon\right\}\\
			&=\pi_*{\nu}\left\{\sup_{j\geqslant n}\left|\frac{1}{j}\sum_{i=0}^{j-1}\varphi\circ f^i\right|>\epsilon\right\}\\
			&={\nu}\left(\pi^{-1}\left\{\sup_{j\geqslant n}\left|\frac{1}{j}\sum_{i=0}^{j-1}\varphi\circ f^i\right|>\epsilon\right\}\right)\\
			&={\nu}\left\{\sup_{j\geqslant n}\left|\frac{1}{j}\sum_{i=0}^{j-1}\varphi\circ f^i \circ \pi\right|>\epsilon\right\}\\
			&={\nu}\left\{\sup_{j\geqslant n}\left|\frac{1}{j}\sum_{i=0}^{j-1}\varphi\circ \pi \circ {T}^i\right|>\epsilon\right\}\\&= \mld_{{\nu}}(\tilde{\varphi},\epsilon,n).
		\end{aligned}
	\end{equation*}
This completes the proof.
\end{proof}

\begin{rmk}\label{rmk.semi}
In the proof of the previous lemma we have only used that $\pi:\Delta\to M$ is a semiconjugacy between the dynamical systems $(T,\nu)$ and $(f,\mu)$, for which $\pi_*\nu=\mu$.
\end{rmk}

\subsection{Maximal large deviations for the lifting}
Instead of working directly with the lifted observable $\tilde{\varphi}$, we use the following decomposition presented in \cite[Lemma~3.2]{MN05},
\begin{equation}\label{MN}
	\tilde{\varphi}=\psi+\chi-\chi\circ T,
\end{equation}
where $\psi\colon{\D}\to\mathbb{R}$ belongs in $\mathcal{F}_{\beta'}(\D)$, with   $\beta'=\beta^{\eta/2}$, and   $\chi\colon{\D}\to\mathbb{R}$ is bounded. Moreover, 
\begin{equation}\label{eq.normas}
\|\chi\|_\infty \leqslant C\|\varphi\|_{\mathcal H_\eta}\qand \|\psi\|_\beta \leqslant C\|\varphi\|_{\mathcal H_\eta}.
\end{equation}
 Since $\psi\in \mathcal{F}_{\beta'}(\D)$  the conclusions of Theorem~\ref{main3} hold, and so 
\begin{equation}\label{eq.um}
	\mld_{\nu}(\psi,\epsilon,n)\leqslant C_{\alpha,q}   \lVert\psi\rVert_{\beta'}\lVert\psi\rVert_{\infty}^{2q-1}\epsilon^{-2q}n^{-\alpha}
\end{equation}
and 
\begin{equation}\label{eq.dois}
	 \mld_{\nu}(\psi,\epsilon,n)\leqslant C_{\tau,\theta,\psi}e^{-\tau' n^{\theta'}\epsilon^{2\theta'}}
\end{equation}
with  constants as in Theorem \ref{main3}.

Now, recalling Lemma~\ref{ld1}, we just have to check that $\mld_{\nu}(\tilde{\varphi},\epsilon,n)$ and $\mld_{\nu}\left(\psi,\epsilon,n\right)$ have the same type of convergence to 0, as $n\to\infty$. Without loss of generality, we may assume that $\int\tilde{\varphi}d{\nu}=0$. This way, we have  
\begin{equation}\label{eq.int}
	\int\psi d{\nu}+\int \chi d{\nu}-\int\chi\circ{T} d{\nu}=0.
\end{equation}
Since the measure ${\nu}$ is invariant under ${T}$, then $\int \chi d{\nu}=\int\chi\circ{T} d{\nu}$, and the previous equation becomes $\int\psi d{\nu}=0$. This means that
\begin{equation*}
	\mld_{{\nu}}\left(\psi, \epsilon, n\right)={\nu}\left\{\sup_{j\geqslant n}\left| \frac{1}{j}\sum_{i=0}^{j-1}\psi\circ {T}^i\right|>\epsilon\right\}.
\end{equation*}
Since, for every $n\in\mathbb{N}$, we have $$\left|\frac{1}{n}\sum_{i=0}^{n-1}\tilde{\varphi}\circ{T}^i-\frac{1}{n}\sum_{i=0}^{n-1}\psi\circ{T}^i\right|\leqslant\frac{2\lVert\chi\rVert_\infty}{n},$$ by taking $n$ sufficiently large so that $2\lVert\chi\rVert_{\infty}/n\leqslant\epsilon/2$, we deduce that 
\begin{equation}\label{eq.comld}
	\mld_{\nu}(\tilde{\varphi},\epsilon,n)\leqslant\mld_{\nu}\left(\psi,\epsilon/2,n\right).
\end{equation} 
Finally, using \eqref{eq.normas}-\eqref{eq.comld} and Lemma~\ref{ld1}, we  finish  the proof of  Theorem~\ref{Maximal large deviations}.

\section{Young structures and recurrence times}\label{proof2}


Here we prove Theorems~\ref{main2} and \ref{main}. 
We first prove  Theorem \ref{main2} and leave the proof of Theorem~\ref{main} for the end of this section, where it will be obtained as a consequence of Theorem \ref{main2}  and Theorem~\ref{Maximal large deviations}. 
The proof of \cite[Proposition 9.1]{ADL17} provides $N\geqslant 1$ such that 
\begin{equation}\label{neg}
\int \log \|(Df^{N}|_{E_{x}^{cu}})^{-1}\| d\mu < 0
\end{equation}
and  \(  (f^{N}, \mu)  \) has at most \(  N  \)
ergodic components. By \eqref{neg}, at least one of these ergodic components, whose support
 we denote by \(  \Sigma \),
satisfies
\(
    \int_{\Sigma}\log \|(Df^{N}|{E_{x}^{cu}})^{-1}\|d\mu < 0.
\)
Hence, using  Birkhoff's Ergodic Theorem, we deduce that 
\(  f^{N}  \) is \emph{non-uniformly expanding along~\(  E^{cu}  \)}
almost everywhere:
\begin{equation}\label{eq.integral} 
\lambda=\lim_{n\rightarrow\infty}\frac{1}{n}\sum_{j=0}^{n-1}
\log\|(Df^N|{E_{f^{Nj}(x)}^{cu}})^{-1}\| =\int_{\Sigma}\log\|(Df^N|{E_{x}^{cu}})^{-1}\|d\mu <0,
 \end{equation}
 for $\mu$ almost every~\(x\in \Sigma\).  It easily follows that  
 \begin{equation*}
 	\mathcal{E}(x)=\min\left\{k\in\mathbb{N}\colon \frac{1}{n}\sum_{i=0}^{n-1}\phi\left(f^{Ni}(x)\right)\leqslant \frac{\lambda}{2}, \forall n\geqslant k\right\}
 \end{equation*}
 is defined and finite $\mu$ almost everywhere. The proof of Theorem~\ref{main2} will be deduced by mean of \cite[Theorem~A]{AP10}, in the polynomial case, and  \cite[Theorem~A]{AL15}, in the (stretched) exponencial case. We just need
 to see that $\leb_{\gamma_0}\left\{\mathcal{E}>n\right\}$ decays with at most the same rate of $\mld_{\mu}(\phi,\epsilon,n)$, for some (hence, for all) unstable disk $\gamma_0\in \Gamma^u$. This will be achieved in Lemma~\ref{AFLV3.2} below. 
 
Using~\eqref{eq.integral} and  \cite[Theorem D]{ADL17} applied  to $f^N$, we obtain a Young structure   $\Lambda\subset \Sigma$, given by a set of stable disks $\Gamma^s$ and a set of unstable disks $\Gamma^u$ as in Subsection~\ref{se.young}. Moreover, considering the associated Young tower map $T:\Delta\to\Delta$ and the respective projection $\pi:\Delta\to M$, we have
 $\mu=\pi_*\nu$, where $\nu$ is the unique SRB measure for $T$; recall Subsection~\ref{sub.lift}. 
  Consider now the H\"older observable
\begin{equation}\label{eq.phin}
		\phi=\log\lVert \left(Df^N|_{E^{cu}_x}\right)^{-1}\rVert
	\end{equation} 
 and $\tilde\phi=\phi\circ\pi$ the lift to the tower. It follows from Lemma~\ref{ld1} that
  \begin{equation*}
	\mld_\mu\left(\phi,\epsilon,n\right)=\mld_{\nu}(\tilde{\phi},\epsilon,n).
	 \end{equation*}
As observed in~\eqref{MN}, there exist $\psi\in \mathcal{F}_{\beta^{\eta/2}}(\D)$   which is constant on stable leaves and $\chi$ bounded such that
\begin{equation*}\label{MN2}
	\tilde{\phi}=\psi+\chi-\chi\circ T.
\end{equation*}
Since $\psi$ is constant on stable disks, we may also think of it as an observable on the quotient tower $\bar\Delta$ introduced in Subsection~\ref{se.quotient}. 
Moreover, as in  \eqref{eq.int}
$$\int\psi d\bar\nu=\int\psi d \nu=\int\tilde \phi d\nu= \int \phi d\mu.$$ 
Therefore, it is no restriction to assume that these integrals are all equal to zero.
It also follows that for every $n\in\mathbb{N}$ 
  $$
  \left|\frac{1}{n}\sum_{i=0}^{n-1}{\tilde\phi}\circ{ T}^i - \frac{1}{n}\sum_{i=0}^{n-1}\psi\circ{ T}^i \right|\leqslant\frac{2\lVert\chi\rVert_\infty}{n},
  $$ 
 which then implies, as in \eqref{eq.comld},
\begin{equation}\label{eq.bounddif}
	\mld_{ {\nu}}(\tilde{\phi},\epsilon,n) \geqslant \mld_{ {\nu}}(\psi,3\epsilon/2,n).
\end{equation}
for $n$ sufficiently large.   As in   Lemma~\ref{ld1} (recall Remark~\ref{rmk.semi}), we obtain
	$$\mld_{{\nu}}(\psi,3\epsilon/2,n)=\mld_{\bar{\nu}}(\psi,3\epsilon/2,n).$$
%
%
Now,   note that by the construction of the quotient tower in Subsection~\ref{se.quotient}, we have that the level zero $\bar\Delta_0$ of the quotient tower is an unstable disk $\gamma_0\in\Gamma^u$. Consider  $\nu_0=\bar\nu\vert_{\gamma_0}$. Therefore,
\begin{equation*}
\begin{aligned}
	\mld_{\bar{\nu}}(\psi,3\epsilon/2,n)	&=\bar\nu\left\{\sup_{j\geqslant n}\left|\frac{1}{j}\sum_{i=0}^{j-1}\psi\circ \overline T^i \right|>\frac32\epsilon\right\}\\
	&\geqslant \nu_0\left\{\sup_{j\geqslant n}\left|\frac{1}{j}\sum_{i=0}^{j-1}\psi\circ \overline T^i \right|>\frac32\epsilon\right\}\\
	&= \nu_0\left\{\sup_{j\geqslant n}\left|\frac{1}{j}\sum_{i=0}^{j-1}\psi\circ \ T^i \right|>\frac32\epsilon\right\},
\end{aligned}
\end{equation*}
where in the last equality we used the fact that   $T|_{\gamma_0}=\overline{T}|_{\gamma_0}$.
\begin{lemma}
	There exists a constant $c>0$ such that for $n$ sufficiently large
	\begin{equation*}
  \nu_0\left\{\sup_{j\geqslant n}\left|\frac{1}{j}\sum_{i=0}^{j-1}\psi\circ T^i \right|>\frac32\epsilon\right\}\geqslant 	c	\leb_{\gamma_0}\left\{\sup_{j\geqslant n}\left|\frac{1}{j}\sum_{i=0}^{j-1}\phi\circ f^{Ni} \right|>\frac52\epsilon\right\}.
	\end{equation*}
\end{lemma}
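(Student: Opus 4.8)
The plan is to reduce the claimed inequality to two independent ingredients: a set inclusion that replaces the Birkhoff sums of $\psi$ under $T$ by those of $\phi$ under $f^N$ (at the cost of widening the threshold from $\tfrac32\epsilon$ to $\tfrac52\epsilon$), and a comparison of the measures $\nu_0$ and $\leb_{\gamma_0}$ on the base disk that produces the constant $c$.

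First I would exploit the decomposition $\tilde\phi=\psi+\chi-\chi\circ T$ to write $\psi=\tilde\phi-(\chi-\chi\circ T)$. Summing along a $T$-orbit, the coboundary telescopes, so for every $x$ and every $j\geqslant 1$,
\begin{equation*}
\frac1j\sum_{i=0}^{j-1}\psi\circ T^i(x)=\frac1j\sum_{i=0}^{j-1}\tilde\phi\circ T^i(x)-\frac1j\bigl(\chi(x)-\chi(T^j(x))\bigr),
\end{equation*}
and the two averages therefore differ by at most $2\lVert\chi\rVert_\infty/j\leqslant 2\lVert\chi\rVert_\infty/n$ whenever $j\geqslant n$. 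Next, for $x$ in the base disk $\gamma_0$ the semiconjugacy acts as the identity, $\pi(x)=x$, and the relation $\pi\circ T=f^N\circ\pi$ gives $\tilde\phi\circ T^i(x)=\phi\circ f^{Ni}(x)$, so that $\frac1j\sum_{i=0}^{j-1}\tilde\phi\circ T^i(x)=\frac1j\sum_{i=0}^{j-1}\phi\circ f^{Ni}(x)$ on $\gamma_0$.

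Combining these two observations and choosing $n$ large enough that $2\lVert\chi\rVert_\infty/n\leqslant\epsilon$, I would show that any $x\in\gamma_0$ for which $\sup_{j\geqslant n}\lvert\frac1j\sum_{i=0}^{j-1}\phi\circ f^{Ni}(x)\rvert>\tfrac52\epsilon$ also satisfies $\sup_{j\geqslant n}\lvert\frac1j\sum_{i=0}^{j-1}\psi\circ T^i(x)\rvert>\tfrac32\epsilon$: indeed, for a $j\geqslant n$ realizing the first inequality, the coboundary estimate forces $\lvert\frac1j\sum_{i=0}^{j-1}\psi\circ T^i(x)\rvert>\tfrac52\epsilon-\epsilon=\tfrac32\epsilon$. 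This yields the inclusion of the corresponding subsets of $\gamma_0$, and hence $\nu_0\{\sup_{j\geqslant n}\lvert\frac1j\sum\psi\circ T^i\rvert>\tfrac32\epsilon\}\geqslant\nu_0\{\sup_{j\geqslant n}\lvert\frac1j\sum\phi\circ f^{Ni}\rvert>\tfrac52\epsilon\}$.

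Finally, I would pass from $\nu_0$ to $\leb_{\gamma_0}$. Since $\nu_0=\bar\nu|_{\gamma_0}$ and $\bar\nu$ is the SRB measure of the quotient tower, its density with respect to $\leb_{\gamma_0}$ on the base is bounded below by a positive constant $c$ — a standard consequence of the Gibbs property \ref{Y4} and the bounded distortion of $f^R$ in Young's construction. Thus $\nu_0(E)\geqslant c\,\leb_{\gamma_0}(E)$ for every measurable $E\subset\gamma_0$, and applying this to $E=\{\sup_{j\geqslant n}\lvert\frac1j\sum\phi\circ f^{Ni}\rvert>\tfrac52\epsilon\}$ completes the proof. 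The main obstacle is the density lower bound: although it is a well-known feature of Young structures, it must be invoked through the Gibbs and distortion estimates, whereas the threshold bookkeeping in the coboundary step is the only genuinely new computation.
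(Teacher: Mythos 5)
Your proposal is correct and follows essentially the same route as the paper: both arguments combine the telescoping coboundary bound $|\frac1j\sum\psi\circ T^i-\frac1j\sum\tilde\phi\circ T^i|\leqslant 2\lVert\chi\rVert_\infty/j$ with the identification $\tilde\phi\circ T^i(x,0)=\phi\circ f^{Ni}(x)$ on $\gamma_0$ and the density lower bound $d\nu_0/d\leb_{\gamma_0}>c$ (which the paper draws from the proof of Theorem~3.24 in \cite{A20}). The only difference is the order in which the density bound and the set inclusion are applied, which is immaterial.
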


\begin{proof}
	 By the proof of \cite[Theorem 3.24]{A20}, there is a constant $c>0$ such that ${d\nu_0/d\leb_{\gamma_0}>c}$. Therefore,
	 	\begin{equation*}
		\begin{aligned}
	 \nu_0\left\{\sup_{j\geqslant n}\left|\frac{1}{j}\sum_{i=0}^{j-1}\psi\circ T^i\right|>\frac32\epsilon\right\}
	\geqslant c\leb_{\gamma_0}\left\{\sup_{j\geqslant n}\left|\frac{1}{j}\sum_{i=0}^{j-1}\psi\circ T^i\right|>\frac32\epsilon\right\}
		\end{aligned}
	\end{equation*}
Moreover, by \eqref{eq.bounddif}, we deduce that, for $n$ sufficiently large
	\begin{equation*}
		\leb_{\gamma_0}\left\{\sup_{j\geqslant n}\left|\frac{1}{j}\sum_{i=0}^{j-1}\psi\circ  T^i \right|>\frac32\epsilon\right\} \geqslant \leb_{\gamma_0}\left\{\sup_{j\geqslant n}\left|\frac{1}{j}\sum_{i=0}^{j-1}\tilde \phi \circ {T}^i \right|>\frac52\epsilon\right\}.
	\end{equation*}
Now, for any $x \in\gamma_0$ and any $i\in\mathbb{N}$,
	\begin{equation*}
		\begin{aligned}
			\tilde{\phi}\circ T^i(x,0)&=\phi\circ\pi\circ {T}^i(x,0)\\
			&=\phi\circ f^{Ni}\circ\pi(x,0)\\
			&=\phi\circ f^{Ni}(x).
		\end{aligned}
	\end{equation*}
	Hence,
	\begin{equation*}
		\leb_{\gamma_0}\left\{\sup_{j\geqslant n}\left|\frac{1}{j}\sum_{i=0}^{j-1}\tilde \phi \circ {T}^i \right|>\frac52\epsilon\right\}=\leb_{\gamma_0}\left\{\sup_{j\geqslant n}\left|\frac{1}{j}\sum_{i=0}^{j-1}\phi \circ f^{Ni} \right|>\frac52\epsilon\right\},
	\end{equation*}
	thus yielding the expected  result.
\end{proof}

From what we have seen above, we conclude  that, for $n\in\mathbb{N}$ sufficiently large,
	\begin{equation}\label{eq.LebMLD}
		\mld_\mu(\phi,\epsilon,n)\geqslant  c\leb_{\gamma_0}\left\{\sup_{j\geqslant n}\left|\frac{1}{j}\sum_{i=0}^{j-1}\phi \circ f^{Ni} \right|>\frac52\epsilon\right\}.
	\end{equation}
The next lemma concludes the proof of Theorem~\ref{main2}.

\begin{lemma}\label{AFLV3.2}
	For $n\in\mathbb{N}$ sufficiently large,
	\begin{equation*}
		\mld_{\mu}(\phi,\epsilon,n)\geqslant c \leb_{\gamma_0}\left\{\mathcal{E}>n\right\}
	\end{equation*}
\end{lemma}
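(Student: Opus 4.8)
The plan is to exhibit the set $\{\mathcal{E}>n\}$ as a subset of a maximal-deviation set of exactly the type whose $\leb_{\gamma_0}$-measure is already controlled by \eqref{eq.LebMLD}, and then simply read off the asserted inequality. The conceptual heart of the matter is that the threshold $\lambda/2$ in the definition of $\mathcal{E}$ sits strictly between the true mean $\lambda<0$ and $0$, so that violating the defining inequality forces the \emph{centred} ergodic average to be bounded away from zero. I would first unwind the definition of $\mathcal{E}$: the property ``$\frac1m\sum_{i=0}^{m-1}\phi\circ f^{Ni}\le\lambda/2$ for all $m\ge k$'' is monotone in $k$, so
\[
\{\mathcal{E}>n\}=\left\{x:\ \exists\,m\ge n,\ \frac1m\sum_{i=0}^{m-1}\phi\big(f^{Ni}(x)\big)>\frac{\lambda}{2}\right\}.
\]

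Next I would use that $\int\phi\,d\mu=\lambda<0$. On the set above the centred average satisfies $\frac1m\sum_{i=0}^{m-1}\phi\circ f^{Ni}-\lambda>\lambda/2-\lambda=|\lambda|/2$, whence
\[
\{\mathcal{E}>n\}\subseteq\left\{x:\ \sup_{j\ge n}\left|\frac1j\sum_{i=0}^{j-1}\phi\circ f^{Ni}-\lambda\right|>\frac{|\lambda|}{2}\right\}.
\]
Now I would fix $\epsilon$ once and for all small enough that $\tfrac52\epsilon\le|\lambda|/2$, i.e.\ $\epsilon\le|\lambda|/5$; this is harmless, since in Theorem~\ref{main2} we are free to choose $\epsilon$. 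With this choice the last set is contained in $\{\sup_{j\ge n}|\frac1j\sum_{i=0}^{j-1}\phi\circ f^{Ni}-\lambda|>\tfrac52\epsilon\}$, which is precisely the set measured in \eqref{eq.LebMLD} (recall that the average there is centred by its mean $\lambda$, the maximal large deviation being insensitive to additive constants). Combining the two inclusions with \eqref{eq.LebMLD} yields, for $n$ large,
\[
c\,\leb_{\gamma_0}\{\mathcal{E}>n\}\le c\,\leb_{\gamma_0}\left\{\sup_{j\ge n}\left|\frac1j\sum_{i=0}^{j-1}\phi\circ f^{Ni}-\lambda\right|>\frac52\epsilon\right\}\le\mld_\mu(\phi,\epsilon,n),
\]
which is the assertion.

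The routine parts are the monotonicity unwinding of $\mathcal{E}$ and the elementary set inclusions. The one genuinely delicate point, and the step I would be most careful about, is the bookkeeping of the mean: the function $\mathcal{E}$ is defined through the true, uncentred observable $\phi$ with $\int\phi\,d\mu=\lambda$, whereas \eqref{eq.LebMLD} was obtained after normalising this integral to zero. It is exactly the strict negativity of $\lambda$ that converts the one-sided threshold $\lambda/2$ into a two-sided deviation of size $|\lambda|/2$ and lets the two pictures align, at the cost of fixing $\epsilon$ below $|\lambda|/5$.
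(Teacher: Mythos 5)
Your proposal is correct and follows essentially the same route as the paper: both arguments reduce the lemma to the inclusion of $\{\mathcal{E}>n\}$ into the maximal-deviation set appearing in \eqref{eq.LebMLD}, using that the threshold $\lambda/2$ lies at distance $|\lambda|/2$ from the mean $\lambda$ and then choosing $\epsilon$ so that $\tfrac52\epsilon\leqslant|\lambda|/2$ (the paper fixes $\epsilon=-\lambda/5$ and phrases the inclusion via an auxiliary first-entry time $N_\epsilon$, which is only a notational difference). Your explicit bookkeeping of the centring of $\phi$ is in fact slightly more careful than the paper's.
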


\begin{proof}
	Set
	\begin{equation*}
		{S}_n\phi(x) =\left|\frac{1}{n}\sum_{i=0}^{n-1}\phi\left(f^{Ni}(x)\right) \right|,
	\end{equation*}
	By Birkhoff's Ergodic Theorem, ${S}_n\phi(x)$ converges to zero, for $\mu$ almost every $x$. Set also, for $\epsilon>0$,
	\begin{equation*}
		N_\epsilon(x)= \min\left\{N\in\mathbb{N}\colon {S}_n\phi(x)\leqslant\epsilon, \forall n\geqslant N\right\}.
	\end{equation*}	
	Since
	\begin{equation}\label{mldineq}
		\left\{N_\epsilon>n\right\}\subset \left\{\sup_{j\geqslant n} {S}_j\phi>\epsilon\right\},
	\end{equation}
	we deduce that
	\begin{equation*}
	\leb_{\gamma_0}\left\{\sup_{j\geqslant n} {S}_j\phi>\epsilon\right\} \geqslant	\leb_{\gamma_0}\left\{N_\epsilon>n\right\}.
	\end{equation*}
	Together with \eqref{eq.LebMLD}, we conclude that, for $n$ sufficiently large,
	\begin{equation*}
	\mld_\mu(\phi,\epsilon,n) \geqslant c	\leb_{\gamma_0}\left\{N_{5\epsilon/2}>n\right\}.
	\end{equation*}
	Finally,  setting  $\epsilon = -\lambda/5$, we have
	\begin{equation*}
		\left\{\mathcal{E}>n\right\}\subseteq \left\{N_\epsilon>n\right\},
	\end{equation*}
and so
		\begin{equation*}
		\mld_{\mu}(\phi,\epsilon,n)\geqslant c\leb_{\gamma_0}\left\{\mathcal{E}>n\right\},
	\end{equation*}
	for $n$ sufficiently large.
\end{proof} 

%
%
%

 \begin{rmk} 
The inequality~\eqref{mldineq} constitutes the principal motivation for considering maximal large deviations in this context. Indeed, were we to rely on standard large deviations instead, we would be compelled to compare the set $\left\{N_\epsilon > n\right\}$ with families of the form $\left\{S_n \phi > \epsilon\right\}$, and in place of the inclusion~\eqref{mldineq} we would merely obtain
\[
\left\{N_\epsilon > n\right\} \subset \bigcup_{\ell \geqslant n} \left\{S_\ell \phi > \epsilon\right\}.
\]
In this alternative framework, the conclusion of Lemma~\ref{AFLV3.2} would then become
\begin{equation*}
	\sum_{\ell \geqslant n} \ld_{\mu}(\phi,\epsilon,\ell) \geqslant c\, \leb_{\gamma_0}\left(\left\{\mathcal{E} > n\right\}\right),
\end{equation*}
for some positive constant $c$.
While this modification does not significantly affect the exponential and stretched exponential regimes, in the polynomial case it would result in a strictly weaker estimate, namely
\[
\leb_{\gamma_0}\left(\left\{\mathcal{E} > n\right\}\right) \leqslant C n^{-\alpha+1},
\]
which is suboptimal. This distinction is precisely what sets apart the proofs of \cite[Theorem~3.1]{AFL11} and \cite[Theorem~4.6]{BS23}, the latter yielding sharper decay rates for the tail distribution of the return time.
 \end{rmk}

We conclude by establishing the proof of Theorem~\ref{main}. More precisely, we demonstrate that the dynamical system under consideration admits a Young structure -- although not necessarily the same one presupposed in the assumptions of Theorem~\ref{main} -- whose recurrence times exhibit asymptotic behaviour of the same nature as the decay of correlations and the maximal large deviations.
To that end, consider  the observable
\begin{equation*}
	\phi := \log \left\lVert \left(Df^N|_{E^{cu}_x} \right)^{-1} \right\rVert.
\end{equation*}
Since the map $f$ is a $C^{1+\eta}$ diffeomorphism, it follows that the function $\phi$ is $\eta$-H\"older continuous. Consequently, by virtue of Theorem~\ref{Maximal large deviations}, we obtain the following maximal large deviations estimates for $\phi$: in the case of polynomial decay,
\begin{equation*}
	\mld_{\mu}(\phi, \epsilon, n) \lesssim n^{-\alpha},
\end{equation*}
and in the case of (stretched) exponential decay,
\begin{equation*}
	\mld_{\mu}(\phi, \epsilon, n) \lesssim e^{-\tau n^\theta}.
\end{equation*}
In view of these two estimates, the conclusion of Theorem~\ref{main} follows directly from Theorem~\ref{main2}.

\bibliographystyle{acm}

\end{document}